\newtheoremstyle{mytheorem}%
{10.0pt plus 2.0pt minus 2.0pt} 
{10.0pt plus 2.0pt minus 2.0pt} 
{\itshape} 
{} 
{\bfseries} 
{.} 
{ } 
{} 
\newtheoremstyle{mydefinition}%
{10.0pt plus 2.0pt minus 2.0pt} 
{10.0pt plus 2.0pt minus 2.0pt} 
{} 
{} 
{\bfseries} 
{.} 
{ } 
{} 
\newtheoremstyle{myremark}%
{10.0pt plus 2.0pt minus 2.0pt} 
{10.0pt plus 2.0pt minus 2.0pt} 
{} 
{} 
{\itshape} 
{.} 
{ } 
{} 
\theoremstyle{mytheorem}
\newtheorem{theorem}{Theorem}[section]
\newtheorem{lemma}[theorem]{Lemma}
\newtheorem{proposition}[theorem]{Proposition} 
\theoremstyle{myremark}
\newtheorem{remark}[theorem]{Remark}
\theoremstyle{mydefinition}
\newtheorem{definition}[theorem]{Definition}
\newtheoremstyle{myzusatz}
 {10.0pt plus 2.0pt minus 2.0pt} 
{10.0pt plus 2.0pt minus 2.0pt} 
{\itshape} 
{} 
{\bfseries} 
{.} 
{ } 
{\thmname{#1}\thmnumber{ #2}\thmnote{ #3}}
\theoremstyle{myzusatz}
\definecolor{gray1}{gray}{0.8}
\definecolor{gray2}{gray}{0.6}
\definecolor{gray3}{gray}{0.4}
\definecolor{gray4}{gray}{0.2}
\DeclareMathOperator{\rbiprod}{{\cdot\kern-.33em\triangleright\kern-.43em<}}
\def\lbiprod{{>\!\!\!\triangleleft\kern-.33em\cdot}}
\newcommand{\Cc}{\mathcal{C}}
\newcommand{\Aa}{\mathcal{A}}
\newcommand{\ot}{\otimes}
\begin{document}
\title{Hopf formulae for cocommutative Hopf algebras}
\author{Marino Gran and Andrea Sciandra}
\address{%
\parbox[b]{0.9\linewidth}{Institut de Recherche en Mathématique et Physique, Université Catholique de Louvain, Chemin du Cyclotron 2, B-1348 Louvain-la-Neuve, Belgium.}}
 \email{marino.gran@uclouvain.be}
\address{%
\parbox[b]{0.9\linewidth}{University of Turin, Department of Mathematics ``G.\@ Peano'', via
 Carlo Alberto 10, 10123 Torino, Italy.
 }}
 \email{andrea.sciandra@unito.it
}

 \keywords{Hopf algebras, Semi-abelian categories, categorical Galois theory, Hopf formulae, Baer invariants, Cleft extensions}
\subjclass[2020]{Primary 18E13, 16T05; Secondary 18G50.}

\begin{abstract} 
The adjunction between coalgebras and Hopf algebras, first described by Takeuchi, allows one to prove that the semi-abelian category of cocommutative Hopf algebras has enough $\mathcal E$-projective objects with respect to the class $\mathcal{E}$ of cleft extensions. One then proves that, for any cocommutative Hopf algebra, there exists a weak $\mathcal{E}$-universal normal (=central) extension. This fact allows one to apply the methods of categorical Galois theory to classify normal $\mathcal{E}$-extensions and to provide an explicit description of the fundamental group of a cocommutative Hopf algebra in terms of a generalized Hopf formula. Moreover, with any cleft extension, we associate a 5-term exact sequence in homology that can be seen as a Hopf-theoretic analogue of the classical Stallings-Stammbach exact sequence in group theory.
\end{abstract}

\maketitle

\tableofcontents

\section{Introduction}
In this article we show that the methods of non-abelian homological algebra developed in recent years (see \cite{EGV, Protoadditive, RV, D-A}, for instance, and the references therein) can be effectively applied to the category $\mathsf{Hopf}_{\Bbbk,\mathrm{coc}}$ of cocommutative Hopf algebras over an arbitrary field $\Bbbk$.
As a matter of fact, even though semi-abelian categories \cite{JMT} include many interesting examples of categories that are not varieties of universal algebras, so far most of the applications of their (co)homology theories have been considered in the varietal context \cite{EGV}, with the only exceptions of the category of compact Hausdorff groups \cite{Protoadditive} and of some other categories of topological semi-abelian algebras \cite{BC}.
In fact, one can adapt the methods of non-abelian homological algebra to investigate the semi-abelian  category $\mathsf{Hopf}_{\Bbbk,\mathrm{coc}}$ of cocommutative Hopf algebras \cite{GSV}, mainly thanks to two crucial properties that $\mathsf{Hopf}_{\Bbbk,\mathrm{coc}}$ enjoys. 

The first one is the existence in $\mathsf{Hopf}_{\Bbbk,\mathrm{coc}}$ of a remarkable class $\mathcal E$ of extensions, that we call \emph{cleft extensions}, which have some good stability properties (Lemma \ref{thm:classE}), and turn out to play a similar role, from the point of view of categorical Galois Theory \cite{Jan-Algebra}, to the one played by the class of normal epimorphisms in the varietal context \cite{JK}. In our terminology, the cleft extensions are the normal epimorphisms in $\mathsf{Hopf}_{\Bbbk,\mathrm{coc}}$ with the additional property that they have a splitting when considered as morphisms of coalgebras. As we explain in Section \ref{preliminaries}, these morphisms are deeply related to the cleft extensions (see Definition \ref{cleft}) classically occurring in Hopf--Galois theory \cite{KT}, a theory that has already found several applications in different areas of mathematics, like noncommutative geometry and number theory. 

The second crucial property is the existence, in $\mathsf{Hopf}_{\Bbbk,\mathrm{coc}}$, of enough $\mathcal E$-projective objects (Lemma \ref{lem:projectiveobjects}), for this class $\mathcal E$ of cleft extensions. This fundamental fact can be seen as a direct consequence of the existence of the adjunction \eqref{adj} (recalled in Section \ref{Properties E}) between the categories of Hopf algebras and of coalgebras, that was first discovered by M. Takeuchi \cite{Ta}. 
These two properties are useful to build, for any cocommutative Hopf algebra $B$, a 
weak $\mathcal E$-universal normal extension $f \colon A \rightarrow B$ of $B$ (Proposition \ref{WeakUniversal}), so that one can define its fundamental group $\pi_1(B)$ as the group of automorphisms of $\mathbf{0}$ of the Galois groupoid of $f$, as in the work of G. Janelidze \cite{Jan} (see also the more recent article \cite{D-A}, and the references therein).

Thanks to this categorical approach and to the simple form of the commutators of normal (cocommutative) Hopf subalgebras (see \eqref{Huqcommutator}, for instance) 
we can establish, from any $\mathcal E$-projective presentation of a cocommutative Hopf algebra $B$, an explicit Hopf formula describing $\pi_1(B)$, that is, the second homology $\mathsf{H}_2(B)$ of $B$ (Theorem \ref{thm:fundamentalgroup} and Proposition \ref{Pi1}). Moreover, by adapting the methods developed by M. Duckerts-Antoine and T. Everaert in \cite{DE}, a new classification of normal $\mathcal{E}$-extensions of a cocommutative Hopf algebra $B$ can be proved in terms of some suitable discrete fibrations with codomain the Galois groupoid $\mathsf{Gal}(f)$ of any weak $\mathcal E$-universal normal extension $f$ of $B$ (Proposition \ref{discrete-fib}).

In the last section we observe that any short exact sequence in $\mathsf{Hopf}_{\Bbbk,\mathrm{coc}}$ 
\[\begin{tikzcd}
	\mathbf{0} & 
    \mathrm{Hker}(f) & A & B & \mathbf{0}
	\arrow[from=1-1, to=1-2]
	\arrow[hook, from=1-2, to=1-3]
	\arrow[from=1-3, to=1-4,"f"]
    \arrow[from=1-4, to=1-5],
\end{tikzcd}\]
where $f$ is a cleft extension, gives rise to a 5-term exact sequence 
\[\begin{tikzcd}
	\mathsf{H}_2(A) & \mathsf{H}_2(B) & \frac{\mathrm{Hker}(f)}{\mathrm{Hker}(f)[\mathrm{Hker}(f),A]^{+}} & \mathsf{H}_1(A) & \mathsf{H}_1(B) & \mathbf{0}
	\arrow[" ", from=1-2, to=1-3, ""]
	\arrow[" ",from=1-3, to=1-4," "]
    \arrow[" ", from=1-1, to=1-2,"\mathsf{H}_2(f) " ]
	\arrow[" ", from=1-4, to=1-5,"\mathsf{H}_1(f) " ]
    \arrow[" ",from=1-3, to=1-4," "]
    \arrow[" ",from=1-5, to=1-6," "],
\end{tikzcd}\]
involving the first and second homology Hopf algebras (Theorem \ref{SS}). This can be seen as a natural extension of the classical Stallings-Stammbach exact sequence for groups \cite{Stallings} (see \cite{EVdL, EG} for some other generalizations of this result).

We finally remark that the semi-abelianness of $\mathsf{Hopf}_{\Bbbk,\mathrm{coc}}$ and the fact that this category has enough $\mathcal E$-projectives make it certainly possible to further investigate cocommutative Hopf algebras from a (co)homological perspective.
Some other interesting structures, such as (cocommutative) Hopf braces 
\cite{AGV}, could also be studied by applying similar methods, since these are also semi-abelian \cite{Gran-Sciandra} and share many nice properties with cocommutative Hopf algebras.

\medskip

\noindent\textit{Notations and conventions}. We fix an arbitrary field $\Bbbk$ and all vector spaces are understood to be $\Bbbk$-vector spaces. Moreover, by a linear map we mean $\Bbbk$-linear, and the unadorned tensor product $\otimes$ stands for $\otimes_{\Bbbk}$. Algebras over $\Bbbk$ will be associative and unital, and coalgebras over $\Bbbk$ will be coassociative and counital. We will adopt Sweedler’s notation for calculations involving the comultiplication, so that we’ll write
$\Delta(x)=x_{1}\ot x_{2}$ omitting the summation. Given an algebra $A$, with $A^{\mathrm{op}}$ we mean the algebra with opposite product $x\cdot_{\mathrm{op}}y=yx$. Given a coalgebra $C$, with $C^{\mathrm{cop}}$ we mean the coalgebra with opposite coproduct $\Delta^{\mathrm{cop}}(x)=x_{2}\ot x_{1}$.

Given an object $X$ in a category $\Cc$, the identity morphism on $X$ will be denoted either by $\mathrm{Id}_{X}$ or $1_{X}$. The zero object of a pointed category will be denoted by $\mathbf{0}$. Given a morphism $f:A\to B$ in $\Cc$, the kernel of $f$ in $\Cc$ will be denoted by $\mathsf{ker}(f):\mathsf{Ker}(f)\to A$, 
while the kernel pair of $f$ will be denoted by $(\mathsf{Eq}(f),\pi_{1},\pi_{2})$.

\section{Preliminaries}\label{preliminaries}

In this section, we recall some fundamental notions and results that will be useful in the following. \medskip

\noindent\textbf{Categorical Galois structures}. We'll consider the special case of Galois structures associated with \textit{reflective subcategories}, (i.e.  subcategories such that the inclusion functor has a left adjoint) of semi-abelian categories \cite{JK}, where the class of morphisms considered consists in \emph{regular epimorphisms} (i.e. coequalizers of some pair of morphisms in the category). For more details, we refer the reader to \cite{Jan-Algebra, JK}. 

\begin{definition}
A \emph{Galois structure} is a system $\Gamma = \{\mathcal A, \mathcal B, I, H, \eta, \epsilon,\mathsf{RegEpi}(\Aa)\} $
where:
\begin{itemize}
    \item[1)] $\mathcal B$ is a full replete reflective subcategory of a semi-abelian category $\mathcal A$ with inclusion functor $H$ and reflector $I$, unit $\eta$, counit $\epsilon$ (which is then an isomorphism);
    \item[2)] $\mathsf{RegEpi}(\Aa)$ is the class of regular epimorphisms in $\mathcal A$.
\end{itemize}
For every $B \in \mathcal A$, such a Galois structure $\Gamma$ induces an adjunction 
\[\begin{tikzcd}
	{\mathcal A}\downarrow B && {\mathcal B}\downarrow I(B)
	\arrow[""{name=0, anchor=center, inner sep=0}, shift left=1, curve={height=-6pt}, from=1-1, to=1-3, "I^{B}"]
	\arrow[""{name=1, anchor=center, inner sep=0}, shift left=1, curve={height=-6pt}, from=1-3, to=1-1, "H^{B}"]
	\arrow["\dashv"{anchor=center, rotate=-90}, draw=none, from=0, to=1]
\end{tikzcd}\]
where: 
\begin{itemize}
    \item ${\mathcal A}\downarrow B $ is the full subcategory of the slice category $\mathcal A /B$ whose objects are the regular epimorphisms in $\Aa$ 
    with codomain $B$;
    \item ${\mathcal B}\downarrow I(B)$ is the full subcategory of $\mathcal B /I(B)$ whose objects are the regular epimorphisms 
    with codomain $I(B)$;
    \item  $I^B:{\mathcal A}\downarrow B  \rightarrow  {\mathcal B}\downarrow I(B)$ is the functor sending a regular epimorphism  $f \in {\mathcal A}\downarrow B$ to $I(f) \in {\mathcal B}\downarrow I(B)$;
\item $H^B:{\mathcal B}\downarrow I(B)  \rightarrow {\mathcal A}\downarrow B $ is the functor sending a regular epimorphism  $\phi:X\to I(B) \in {\mathcal B}\downarrow I(B) $ to the first projection $\pi_1 \in {\mathcal A}\downarrow B$ in the pullback
\[\begin{tikzcd}
	B \times_{HI(B)} H(X) & H(X) \\
	B & HI(B)
	\arrow[" ",{name=0, anchor=center, inner sep=0}, from=1-1, to=1-2, "{\pi_2}"]
	\arrow[" ",{name=0, anchor=center,  sep=0}, from=1-1, to=2-1, "\pi_1"']
	\arrow[" ",{name=0, anchor=center,  sep=0}, from=1-2, to=2-2, "H(\phi)"]
	\arrow[" ",{name=0, anchor=center, inner sep=0}, from=2-1, to=2-2," \eta_B"']
\end{tikzcd}\]
\item for regular epimorphisms $f \colon A \rightarrow B$ and $\phi \colon X \rightarrow  I(B)$
\[
(\eta^B)_{f} = \langle f, \eta_A \rangle : A \rightarrow B \times_{HI(B)} H(A),\quad (\epsilon^B)_{\phi } = \epsilon_X I(\pi_2) : I( B \times_{HI(B)} H(X)) 
\longrightarrow X.
\]
\end{itemize}
\end{definition}
A Galois structure is \emph{admissible} \cite{JK} when, for any object $B$ in $\mathcal A$, the counit $\epsilon^B$ is an isomorphism, a property that is equivalent to the fact that the functor $H^B \colon {\mathcal B}\downarrow I(B)  \rightarrow {\mathcal A}\downarrow B$ is fully faithful.  \medskip

We also recall the notions of trivial and normal extensions associated with a Galois structures $\Gamma = \{\mathcal A, \mathcal B, I, H, \eta, \epsilon, \mathsf{RegEpi}(\Aa)\} $:

\begin{itemize}
    \item[1)] a regular epimorphism $f \colon A \rightarrow B$ 
    is a \emph{trivial extension} if the following commutative square 

\[\begin{tikzcd}
	A & B \\
	HI(A) & HI(B)
	\arrow[from=1-1, to=1-2, "f"]
	\arrow[from=1-1, to=2-1, "\eta_A"']
	\arrow[from=1-2, to=2-2, "\eta_B"]
	\arrow[from=2-1, to=2-2, "HI(f)"']
\end{tikzcd}\]
is a pullback; 
\item[2)] a regular epimorphism $f\colon A \rightarrow B$ 
is a
\emph{normal extension} if the projection $\pi_1$ in the following pullback is a trivial extension:

\[\begin{tikzcd} 
	\mathsf{Eq}(f) & A \\
	A & B
	\arrow[from=1-1, to=1-2, "\pi_2"]
	\arrow[from=1-1, to=2-1, "\pi_1"']
    \arrow["\lrcorner"{anchor=center, pos=0.125}, draw=none, from=1-1, to=2-2]
	\arrow[from=1-2, to=2-2, "f"]
	\arrow[from=2-1, to=2-2, "f"']
\end{tikzcd}\]
Note that the projection $\pi_1$ is a trivial extension if and only if $\pi_2$ is a trivial extension.
\end{itemize}
If $\Gamma$ is admissible, then $I$ preserves pullbacks along trivial extensions. In particular, trivial extensions are pullback-stable, so that every trivial extension is a normal extension \cite[Proposition 2.4]{JK2}.
It is well known that a normal extension $f$ is a \textit{central extensions}, i.e. there is a regular epimorphism $p$ 
such that the pullback $p^{*}(f)$ of $f$ along $p$ is a trivial extension. Moreover, when $\Aa$ is a semi-abelian category and the reflective subcategory $\mathcal B$ is a \emph{Birkhoff subcategory} (i.e. it is also closed in $\mathcal A$ under subobjects and regular quotients), then the central extensions actually coincide with the normal extensions (\cite[Theorem 4.8]{JK}). As we'll see in Section \ref{sec:classificationnormalextensions}, the admissibility of the Galois structure guarantees the validity of a classification theorem of normal extensions. \medskip

\noindent\textbf{Cleft extensions and crossed products}. We now recall the special subclass of \textit{Hopf--Galois extensions} constituted by the so-called \textit{cleft extensions} and the corresponding equivalent notion of crossed product algebra. We refer the reader to \cite{Mo} for more details. The class of cleft extensions in $\mathsf{Hopf}_{\Bbbk,\mathrm{coc}}$ will play a fundamental role in the application of the methods of categorical Galois theory.

\begin{definition}\label{cleft}
Let $H$ be a Hopf algebra, $(A,\rho:A\to A\ot H)$ a right $H$-comodule algebra, i.e. $A$ is an algebra and a right $H$-comodule such that $m_{A}$ and $u_{A}$ are right $H$-colinear, 
and $B:=A^{\mathrm{co}H}=\{a\in A\ |\ \rho(a)=a\ot1_{H}\}
$ the subalgebra of $A$ of right $H$-coinvariant elements. 

Then, $B\subseteq A$ is called a \textit{cleft extension} if there is a convolution invertible morphism $j:H\to A$ of right $H$-comodules, the so-called \textit{cleaving map}. If $j$ is also a morphism of algebras, then $B\subseteq A$ is called a \textit{trivial extension}.
\end{definition}


\begin{remark}
We recall that $B\subseteq A$ is a \textit{Hopf--Galois extension} if the canonical map $\mathsf{can}:A\otimes_{B}A\to A\otimes H,\ a\otimes_{B}a'\mapsto (a\ot1)\rho(a')$ is invertible. Clearly, a trivial extension is cleft by definition. Moreover, cleft extensions form a special subclass of Hopf--Galois extensions. Indeed, using the cleaving map $j$ one can build the inverse of $\mathsf{can}$ as $\mathsf{can}^{-1}(a\ot h):=aj^{-1}(h_{1})\ot_{B}j(h_{2})$, where $j^{-1}$ denotes the convolution inverse of $j$. 
\end{remark}

Cleft extensions can be interpreted in terms of crossed product algebras.

\begin{definition}
Let $H$ be a Hopf algebra and $B$ an algebra. We say that $H$ \textit{measures} $B$ if there exists a linear map $\rightharpoonup:H\ot B\to B$ such that 
\[
h\rightharpoonup1_{B}=\varepsilon(h)1_{B},\qquad h\rightharpoonup(bb')=(h_{1}\rightharpoonup b)(h_{2}\rightharpoonup b'). 
\]
Suppose that $H$ measures $B$. Then:
\begin{itemize}
    \item[i)] a convolution invertible map $\sigma:H\ot H\to B$ is called a 2-\textit{cocycle with values in} $B$ if $\sigma(h\ot1_{H})=\varepsilon(h)1_{B}=\sigma(1_{H}\ot h)$ and
\[
(h_{1}\rightharpoonup\sigma(h'_{1}\ot h''_{1}))\sigma(h_{2}\ot h'_{2}h''_{2})=\sigma(h_{1}\ot h'_{1})\sigma(h_{2}h'_{2}\ot h'');
\]
\item[ii)] $B$ is a $\sigma$-\textit{twisted left $H$-module} if $1_{H}\rightharpoonup b=b$ and there exists a 2-cocycle $\sigma$ with values in $B$ such that 
\[
h\rightharpoonup(h'\rightharpoonup b)=\sigma(h_{1}\ot h'_{1})((h_{2}h'_{2})\rightharpoonup b)\sigma^{-1}(h_{3}\ot h'_{3});
\]
\item[iii)] if $B$ is a $\sigma$-twisted left $H$-module, the \textit{crossed product algebra} $B\#_{\sigma}H$ is the vector space $B\ot H$ considered as an algebra with product defined by
\[
(b\ot h)\cdot_{\#}(b'\ot h'):=b(h_{1}\rightharpoonup b')\sigma(h_{2}\ot h'_{1})\ot h_{3}h'_{2}
\]
and unit $1_{B}\ot1_{H}$.
\end{itemize}
\end{definition}

We recall that $B\#_{\sigma}H$ is a right $H$-comodule algebra with respect to the right $H$-coaction $\mathrm{Id}_{B}\ot\Delta_{H}:B\#_{\sigma}H\to(B\#_{\sigma}H)\ot H$ and we have $(B\#_{\sigma}H)^{\mathrm{co}H}=B\ot\Bbbk1_{H}\cong B$. Moreover, we observe that every left $H$-module algebra $B$ (where $\rightharpoonup$ is associative) is a $\sigma$-twisted left $H$-module with respect to the trivial 2-cocycle $h\ot h'\mapsto\varepsilon(hh')1_{B}$. In this case we obtain the \textit{smash product algebra} $B\#H$. \medskip

In the following theorem, we recall that crossed product algebras are in 1:1-correspondence with cleft extensions, where smash product algebras correspond to trivial extensions:

\begin{theorem}[{\cite[Theorem 7.2.2]{Mo}}]
    Any crossed product algebra $B\#_{\sigma}H$ is a cleft extension $B\subseteq B\#_{\sigma}H$ with cleaving map $j:H\to B\#_{\sigma}H$, $h\mapsto1_{B}\ot h$.

Conversely, given a cleft extension $B\subseteq A$ with cleaving map $j:H\to A$, one can define a $\sigma$-twisted left $H$-module action and a 2-cocycle with values in $B$ in the following way:
\[
h\rightharpoonup b:=j(h_{1})bj^{-1}(h_{2}), \qquad \sigma(h\ot h'):=j(h_{1})j(h'_{1})j^{-1}(h_{2}h'_{2}).
\]
Then, $A\cong B\#_{\sigma}H$ are isomorphic as right $H$-comodule algebras with inverse bijections:
\[
\psi(a):=a_{0}j^{-1}(a_{1})\ot a_{2}, \qquad \psi^{-1}(b\ot h):=bj(h).
\]
A cleft extension is a trivial extension if and only if the corresponding crossed product algebra is a smash
product algebra.
\end{theorem}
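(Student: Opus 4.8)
The statement breaks into three essentially independent verifications, all of which I would carry out inside the convolution algebra $\mathrm{Hom}(H,A)$, using throughout that $\rho_A$ is an algebra map (equivalently, that $A$ is a right $H$-comodule algebra). Two preliminary observations will be needed repeatedly. First, one may normalise the cleaving map: replacing $j$ by $j(1_H)^{-1}j(-)$ one may assume $j(1_H)=1_A$, and then $j^{-1}(1_H)=1_A$. Second, the convolution inverse of a right $H$-colinear, convolution-invertible map $j$ is \emph{$S$-colinear} (with $S$ the antipode), that is, $\rho_A(j^{-1}(h))=j^{-1}(h_2)\ot S(h_1)$; this follows because $h\mapsto j^{-1}(h_2)\ot S(h_1)$ is a convolution inverse of $\rho_A\circ j$ in $\mathrm{Hom}(H,A\ot H)$, which also admits $\rho_A\circ j^{-1}$ as a convolution inverse, so the two coincide by uniqueness.

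For the first assertion, in $B\#_\sigma H$ the map $j(h)=1_B\ot h$ is visibly right $H$-colinear for the coaction $\mathrm{Id}_B\ot\Delta_H$, and it is convolution invertible precisely because $\sigma$ is: I would write down an explicit inverse in terms of $\sigma^{-1}$ and the antipode and check $j*j^{-1}=j^{-1}*j=u\varepsilon$ using the definition of $\cdot_\#$ and the 2-cocycle identity. Together with the already recorded identification $(B\#_\sigma H)^{\mathrm{co}H}=B\ot\Bbbk 1_H\cong B$, this exhibits $B\subseteq B\#_\sigma H$ as a cleft extension; and when $\sigma$ is the trivial cocycle, $j$ becomes an algebra map, so the extension is trivial.

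For the converse I would start from a cleft extension with normalised cleaving map $j$ and proceed in steps. (i) Check that $h\rightharpoonup b:=j(h_1)bj^{-1}(h_2)$ and $\sigma(h\ot h'):=j(h_1)j(h'_1)j^{-1}(h_2h'_2)$ take values in $B=A^{\mathrm{co}H}$: applying $\rho_A$, distributing it over the products, and using colinearity of $j$ together with $S$-colinearity of $j^{-1}$, the $H$-components telescope to $1_H$ via the antipode and counit axioms. (ii) Verify the bookkeeping axioms: the measuring conditions for $\rightharpoonup$ and the normalisation $\sigma(h\ot1_H)=\varepsilon(h)1_B=\sigma(1_H\ot h)$ are immediate from $j(1_H)=1_A=j^{-1}(1_H)$ and $j*j^{-1}=u\varepsilon$; convolution invertibility of $\sigma$ is witnessed by $\sigma^{-1}(h\ot h'):=j(h_1h'_1)j^{-1}(h'_2)j^{-1}(h_2)$; and the $\sigma$-twisted module identity $h\rightharpoonup(h'\rightharpoonup b)=\sigma(h_1\ot h'_1)((h_2h'_2)\rightharpoonup b)\sigma^{-1}(h_3\ot h'_3)$ together with the 2-cocycle identity both come out by expanding via the definitions and repeatedly cancelling $j^{-1}(x_1)j(x_2)=\varepsilon(x)1_A$ and $j(x_1)j^{-1}(x_2)=\varepsilon(x)1_A$. (iii) Check that $\psi(a):=a_0j^{-1}(a_1)\ot a_2$ and $\psi^{-1}(b\ot h):=bj(h)$ are mutually inverse, using $\rho_A(b)=b\ot1_H$ for $b\in B$, the counit axiom for $\rho_A$, and the two convolution identities; that $\psi^{-1}$ is unital and multiplicative for $\cdot_\#$ — here the precise shape of the crossed product, the definitions of $\rightharpoonup$ and $\sigma$, and repeated cancellation all combine; and that $\psi$ is right $H$-colinear, so that $A\cong B\#_\sigma H$ as right $H$-comodule algebras. (iv) For the last sentence: if $\sigma$ is trivial, transporting the product of $B\#H$ along $\psi^{-1}$ and evaluating on $1_B\ot h$ and $1_B\ot h'$ gives $j(h)j(h')=j(hh')$, so $j$ is an algebra map and the extension is trivial, while the reverse implication was observed above.

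The genuinely laborious points are the 2-cocycle identity for $\sigma$ and the multiplicativity of $\psi^{-1}$; I expect the former — with its three Hopf-algebra variables and the need to split products of elements of $H$ under the coproduct — to be the main obstacle to keeping the Sweedler bookkeeping under control, while everything else is short once the colinearity of $j$ and the $S$-colinearity of $j^{-1}$ are available. All of this is, of course, worked out in detail in \cite[Chapter 7]{Mo}.
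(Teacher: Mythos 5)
Your outline is correct and is essentially the standard argument of the cited reference: the paper itself offers no proof of this statement (it is quoted from Montgomery, Theorem 7.2.2), and your two auxiliary facts — normalising $j$ so that $j(1_H)=1_A$ and the twisted colinearity $\rho_A(j^{-1}(h))=j^{-1}(h_2)\ot S(h_1)$, proved by uniqueness of convolution inverses — are precisely the lemmas on which that textbook proof rests, after which the verifications you list proceed as you describe. No gap beyond the routine Sweedler computations you already flag (and, for completeness, the one-line converse that a cleaving map which is an algebra map forces $\sigma(h\ot h')=\varepsilon(hh')1_B$).
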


In the following, we are interested in the case when $B\#_{\sigma}H$ is a Hopf algebra. If also $B$ is a Hopf algebra and the morphisms $\rightharpoonup:H\otimes B\to B$ and $\sigma:H\otimes H\to B$ are coalgebra maps, 
then the crossed product algebra $B\#_{\sigma}H$ becomes a Hopf algebra with the tensor product coalgebra structure if and only if the following two compatibility conditions hold:
\begin{equation}\label{eq:BHHopf}
g_{1}\otimes(g_{2}\rightharpoonup a)=g_{2}\otimes(g_{1}\rightharpoonup a),\quad g_{1}h_{1}\otimes\sigma(g_{2}\otimes h_{2})=g_{2}h_{2}\otimes\sigma(g_{1}\otimes h_{1}),
\end{equation}
see \cite[Proposition 1.1]{ABM}. 

\begin{remark}
Notice that, if $H$ is cocommutative, then \eqref{eq:BHHopf} is trivially fulfilled. Therefore, if $B$ is a Hopf algebra and $\rightharpoonup$ and $\sigma$ are morphisms in $\mathsf{Coalg}_{\Bbbk}$ then $B\#_{\sigma}H$ is a Hopf algebra. The latter is cocommutative in case also $B$ is cocommutative. 
We also observe that, given a cleft extension $B\subseteq A$ where $A$ is a Hopf algebra, if $H$ is cocommutative and $j:H\to A$ is a morphism of coalgebras then $\rightharpoonup$ and $\sigma$ defined as in the previous theorem are also coalgebra maps.
\end{remark}

Assume that the previous conditions are satisfied so that $B\#_{\sigma}H$ is a Hopf algebra. Then $\pi_{H}:B\#_{\sigma}H\to H$, $b\ot h\mapsto\varepsilon(b)h$ is a morphism in $\mathsf{Hopf}_{\Bbbk}$ that has a section $i_{H}:H\to B\#_{\sigma}H$, $h\mapsto 1_{B}\ot h$ in $\mathsf{Coalg}_{\Bbbk}$. On the other hand, suppose that $\pi:A\to H$ is a morphism in $\mathsf{Hopf}_{\Bbbk}$ that has a section $i:H\to A$ in $\mathsf{Coalg}_{\Bbbk}$ such that $B:=A^{\mathrm{co}H}=\{a\in A\ |\ a_{1}\ot\pi(a_{2})=a\ot1_{H}\}$ is a Hopf algebra; $(A,\pi)$ is called a \textit{coalgebra split extension} of $B$ by $H$ in \cite[Definition 1.2]{ABM}. Then, $A$ is isomorphic to the Hopf algebra $B\#_{\sigma}H$, where 
\[
h\rightharpoonup b:=i(h_{1})bSi(h_{2}), \quad \sigma(g\ot h):=i(g_{1})i(h_{1})Si(g_{2}h_{2}), 
\]
see \cite[Proposition 1.3]{ABM}.

\begin{remark}\label{rmk:cleftextension}
    We recall that, given a morphism $\pi:A\to H$ in $\mathsf{Hopf}_{\Bbbk}$ with a section $i:H\to A$ in $\mathsf{Hopf}_{\Bbbk}$ (in particular, $i$ is in $\mathsf{Coalg}_{\Bbbk}$), then $B:=A^{\mathrm{co}H}$ is an object in $\mathsf{Hopf}(^{H}_{H}\mathcal{YD})$, hence it is not a standard Hopf algebra in general. This happens, for instance, under the assumption of cocommutativity: in this case $B=\mathsf{Ker}(\pi)$ in $\mathsf{Hopf}_{\Bbbk,\mathrm{coc}}$. Therefore, the class of morphisms in $\mathsf{Hopf}_{\Bbbk,\mathrm{coc}}$ that have a section in $\mathsf{Coalg}_{\Bbbk,\mathrm{coc}}$, that we denote by $\mathsf{SplitEpi}^{*}(\mathsf{Coalg}_{\Bbbk,\mathrm{coc}})$, precisely corresponds to the class of cleft extensions in $\mathsf{Hopf}_{\Bbbk,\mathrm{coc}}$ whose cleaving maps are morphisms of coalgebras.

 From now on we'll refer to the morphisms in $\mathsf{SplitEpi}^{*}(\mathsf{Coalg}_{\Bbbk,\mathrm{coc}}) $  as \emph{cleft extensions}.
    The class of split epimorphisms in $\mathsf{Hopf}_{\Bbbk,\mathrm{coc}}$ is denoted by $\mathsf{SplitEpi}(\mathsf{Hopf}_{\Bbbk,\mathrm{coc}})$.
\end{remark}

We can summarize the previous considerations on the category $\mathsf{Hopf}_{\Bbbk,\mathrm{coc}}$ with the following diagram:
\[
\begin{tikzcd}
	\{\textit{cleft extensions}\} & \{\textit{crossed product Hopf algebras}\} & \mathsf{SplitEpi}^{*}(\mathsf{Coalg}_{\Bbbk,\mathrm{coc}}) \\
	\{\textit{trivial extensions}\} & \{\textit{smash product Hopf algebras}\} & \mathsf{SplitEpi}(\mathsf{Hopf}_{\Bbbk,\mathrm{coc}})
	\arrow[tail reversed, from=1-1, to=1-2]
	\arrow[tail reversed, from=1-2, to=1-3]
	\arrow[hook, from=2-1, to=1-1]
	\arrow[tail reversed, from=2-1, to=2-2]
	\arrow[hook, from=2-2, to=1-2]
	\arrow[tail reversed, from=2-2, to=2-3]
	\arrow[hook, from=2-3, to=1-3]
\end{tikzcd}
\]
In the next section, the class of cleft extensions in $\mathsf{Hopf}_{\Bbbk,\mathrm{coc}}$ will be investigated from a categorical perspective.

\section{On the class of cleft extensions of cocommutative Hopf algebras}\label{Properties E}

We recall that the category $\mathsf{Hopf}_{\Bbbk,\mathrm{coc}}$ is semi-abelian and regular epimorphisms coincide with surjective maps, see \cite{GSV}. In this section, we study some categorical properties of a special subclass of regular epimorphisms in $\mathsf{Hopf}_{\Bbbk,\mathrm{coc}}$ using the adjunction between $\mathsf{Hopf}_{\Bbbk,\mathrm{coc}}$ and $\mathsf{Coalg}_{\Bbbk,\mathrm{coc}}$ obtained in \cite{Ta}, which we now briefly recall. \medskip

\noindent\textbf{The free Hopf algebra generated by a coalgebra}. Here we briefly recall the construction of the \textit{free Hopf algebra generated by a coalgebra} $C$ obtained in \cite{Ta}. Given a coalgebra $C$, consider the sequence of coalgebras $(V_{i})_{i\geq0}$ defined as follows
\[
V_{0}:=C, \qquad V_{i+1}:=V_{i}^{\mathrm{cop}}
\]
and the coalgebra $V=\bigoplus_{i=0}^{\infty}{V_{i}}$. Given the morphism of coalgebras $V\to V^{\mathrm{cop}}$, $(x_{0},x_{1},x_{2},\ldots)\mapsto(0,x_{0},x_{1},x_{2},\ldots)$ and the induced bialgebra map $S:T(V)\to T(V)^{\mathrm{op,cop}}$ one can define the two-sided ideal $I$ of $T(V)$ generated by the relations
\[
x_{1}S(x_{2})-\varepsilon(x)1,\qquad S(x_{1})x_{2}-\varepsilon(x)1,\quad \text{for all}\ x\in V.
\]
One can verify that $I$ is also a two-sided coideal and $S(I)\subseteq I$, and this implies that $F(C):=T(V)/I$ is a Hopf algebra \cite[Lemma 1]{Ta}. 
This provides the \textit{free functor} $F:\mathsf{Coalg}_{\Bbbk}\to\mathsf{Hopf}_{\Bbbk}$ which is left adjoint to the forgetful functor $U:\mathsf{Hopf}_{\Bbbk}\to\mathsf{Coalg}_{\Bbbk}$. We recall some details about this adjunction.

Let $\iota:C\rightarrow UF(C)$ be the morphism in $\mathsf{Coalg}_{\Bbbk}$ defined as the composite $C=V_{0}\hookrightarrow V\hookrightarrow T(V)\overset{\pi}\rightarrow F(C)$, where $\pi:T(V)\to F(C)$ denotes the canonical projection. 
The natural bijection 
\begin{equation}\label{naturaliso}
\Phi_{C,H}:=\mathrm{Hom}(\iota, H) :\mathsf{Hopf}_{\Bbbk}(F(C),H)\rightarrow\mathsf{Coalg}_{\Bbbk}(C,U(H)),\ f\mapsto f\iota
\end{equation}
characterizing the previous adjunction, has inverse 
\begin{equation}\label{canonicaliso}
\Phi_{C,H}^{-1}:\mathsf{Coalg}_{\Bbbk}(C,U(H))\rightarrow\mathsf{Hopf}_{\Bbbk}(F(C),H),
\end{equation}
described as in the proof of \cite[Lemma 1]{Ta}. More precisely, given $f:C\rightarrow U(H)$ in $\mathsf{Coalg}_{\Bbbk}$, define $f_{i}$ : $V_{i}\rightarrow H$ in $\mathsf{Coalg}_{\Bbbk}$, for $i\geq0$, as follows:
\[
f_{0}=f, \quad f_{i+1}=S_{H}f_{i}
\]
Then $\sum_{i\geq0}f_{i}:V\to H$ determines a morphism in $\mathsf{Coalg}_{\Bbbk}$ and induces a morphism $\varphi:T(V)\rightarrow H$ in $\mathsf{Bialg}_{\Bbbk}$. This map is zero on $I$ by the construction of $\sum_{i\geq0}f_{i}$. 
Therefore, a morphism $\Phi_{C,H}^{-1}(f):F(C)\rightarrow H$ in $\mathsf{Hopf}_{\Bbbk}$ is induced so that $\Phi_{C,H}^{-1}(f)\pi=\varphi$.

We have that $\iota=\eta_{C}$ is the $C$-component of the unit $\eta$ of this adjunction and it is an injective morphism of coalgebras, see \cite[Corollaries 6 and 9]{Ta}. We recall that $\eta_{C}$ monomorphism for all $C$ in $\mathsf{Coalg}_{\Bbbk}$ is equivalent to $F$ being faithful. 

The Hopf algebra $F(C)$ satisfies the following universal property: for any coalgebra map $\phi:C\to H$ with $H$ Hopf algebra there is a unique Hopf algebra map $\phi':F(C)\to H$ such that the following diagram commutes: 
\begin{equation}\label{universalproperty}
\begin{tikzcd}
	C && F(C) \\
	& H
	\arrow[from=1-1, to=1-3, "\eta_{C}"]
	\arrow[from=1-1, to=2-2, "\phi"']
	\arrow[dashed, from=1-3, to=2-2, "\phi'"]
\end{tikzcd}
\end{equation}

The free Hopf algebra $F(C)$ over a cocommutative coalgebra $C$ is cocommutative and $\mathsf{Hopf}_{\Bbbk,\mathrm{coc}}$ is monadic over $\mathsf{Coalg}_{\Bbbk,\mathrm{coc}}$, see \cite[Proposition 28 2.]{Porst}. Therefore, the previous adjunction restricts to the following one:
\begin{equation}\label{adj}
\begin{tikzcd}
	\mathsf{Coalg}_{\Bbbk,\mathrm{coc}} &&& \mathsf{Hopf}_{\Bbbk,\mathrm{coc}}
	\arrow[" "{name=1, anchor=center, inner sep=0}, curve={height=-8pt}, from=1-1, to=1-4,"F"]
	\arrow[" "{name=1, anchor=center, inner sep=0}, curve={height=-8pt}, from=1-4, to=1-1, "U"]
    \arrow["\vdash"{anchor=center, rotate=90}, draw=none, from=1-4, to=1-1],
\end{tikzcd}
\end{equation}
 We will use this adjunction in the following. 
\begin{remark}
The adjunction between coalgebras and Hopf algebras obtained in \cite{Ta} is related to the adjunction between bialgebras and Hopf algebras in \cite{Manin} (the free Hopf algebra generated by a bialgebra), as the former arises by combining the latter construction with the free bialgebra over a coalgebra, which is just given by the tensor algebra over the
underlying vector space of the coalgebra (see e.g. \cite{Sweedler}). We point out that, under the cocommutativity assumption, the construction of the free Hopf algebra over a bialgebra simplifies in some cases  \cite[Proposition 3.12]{AMS}.
\end{remark}

We denote by $\mathcal{E}$ the class of regular epimorphisms (equivalently, surjective maps) in $\mathsf{Hopf}_{\Bbbk,\mathrm{coc}}$ that have a section as morphisms in $\mathsf{Coalg}_{\Bbbk,\mathrm{coc}}$, that we refer to as \emph{cleft extensions} (see Remark \ref{rmk:cleftextension}).


\begin{remark}\label{rmk:crossedproduct}
As we recalled before, given $p:M\to N$ in $\mathcal{E}$ with section $i:N\to M$ in $\mathsf{Coalg}_{\Bbbk,\mathrm{coc}}$, the vector space $M^{\mathrm{co}N}:=\{x\in M\ |\ x_{1}\ot p(x_{2})=x\ot1_{N}\}$
coincides with the kernel $\mathsf{Ker}(p)$ of $p$ in $\mathsf{Hopf}_{\Bbbk,\mathrm{coc}}$, usually denoted by $\mathrm{Hker}(p)$. Moreover, by \cite[Proposition 1.3]{ABM}, this is equivalent to a crossed product extension $(\mathrm{Hker}(p)\#_{\sigma}N,\pi_{N})$, 
i.e. we have an isomorphism of Hopf algebras $\mathrm{Hker}(p)\#_{\sigma}N\cong M$.
\end{remark}

The forgetful functor $U:\mathsf{Hopf}_{\Bbbk,\mathrm{coc}}\to\mathsf{Coalg}_{\Bbbk,\mathrm{coc}}$ is conservative and this is equivalent to $\epsilon_{A}$ being a strong epimorphism for all $A$ in $\mathsf{Hopf}_{\Bbbk,\mathrm{coc}}$, where $\epsilon$ is the counit of the adjunction \eqref{adj}. Strong epimorphisms in $\mathsf{Hopf}_{\Bbbk,\mathrm{coc}}$ coincide with regular epimorphisms and then with surjective Hopf algebra maps. Therefore, we already know that the components of the counit $\epsilon$ of the adjunction \eqref{adj} are surjective; in the next result we show that they belong to $\mathcal{E}$. 

\begin{lemma}\label{lemma:counitadj}
    Given $A$ in $\mathsf{Hopf}_{\Bbbk,\mathrm{coc}}$, the $A$-component $\epsilon_{A}$ of the counit $\epsilon$ of the adjunction \eqref{adj} is 
    in $\mathcal{E}$.
\end{lemma}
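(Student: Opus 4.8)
The statement asks us to show that for any cocommutative Hopf algebra $A$, the counit $\epsilon_A \colon F(U(A)) \to A$ of the adjunction \eqref{adj} lies in the class $\mathcal{E}$; that is, it is a surjective morphism in $\mathsf{Hopf}_{\Bbbk,\mathrm{coc}}$ admitting a section in $\mathsf{Coalg}_{\Bbbk,\mathrm{coc}}$. Surjectivity is already noted in the excerpt (it follows from $U$ being conservative, hence $\epsilon_A$ a strong = regular = surjective epimorphism). So the real content is producing the coalgebra-theoretic section. The plan is to exhibit an explicit candidate and verify it is a coalgebra morphism splitting $\epsilon_A$.

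The natural candidate for the section is the unit $\eta_{U(A)} \colon U(A) \to UF(U(A))$ of the adjunction, viewed as a morphism in $\mathsf{Coalg}_{\Bbbk,\mathrm{coc}}$ --- recall from the excerpt that $\eta_C = \iota \colon C \hookrightarrow UF(C)$ is always an injective coalgebra map. The first step is to recall the triangle identities for the adjunction $F \dashv U$: one of them reads $U(\epsilon_A) \circ \eta_{U(A)} = \mathrm{Id}_{U(A)}$ as morphisms of coalgebras. This is exactly the assertion that $\eta_{U(A)}$ is a section of $\epsilon_A$ in $\mathsf{Coalg}_{\Bbbk,\mathrm{coc}}$: $\eta_{U(A)}$ is a coalgebra morphism by construction, and composing it with $\epsilon_A$ (whose underlying coalgebra map is $U(\epsilon_A)$) gives the identity. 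Hence $\epsilon_A$, being a regular epimorphism in $\mathsf{Hopf}_{\Bbbk,\mathrm{coc}}$ with a section in $\mathsf{Coalg}_{\Bbbk,\mathrm{coc}}$, belongs to $\mathcal{E}$ by the very definition of that class (Remark \ref{rmk:cleftextension} together with the paragraph introducing $\mathcal{E}$).

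Concretely, I would spell out why $U(\epsilon_A)\circ \eta_{U(A)} = \mathrm{Id}$: under the natural bijection $\Phi_{U(A),A} = \mathrm{Hom}(\iota, A) \colon \mathsf{Hopf}_{\Bbbk}(F(U(A)), A) \to \mathsf{Coalg}_{\Bbbk}(U(A), U(A))$ of \eqref{naturaliso}, the counit $\epsilon_A$ is precisely the morphism corresponding to $\mathrm{Id}_{U(A)}$, i.e. $\epsilon_A = \Phi_{U(A),A}^{-1}(\mathrm{Id}_{U(A)})$, and applying $\Phi_{U(A),A}$ back gives $\epsilon_A \circ \iota = \mathrm{Id}_{U(A)}$ in $\mathsf{Coalg}_{\Bbbk}$, which is the identity $U(\epsilon_A)\circ \eta_{U(A)} = \mathrm{Id}_{U(A)}$. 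Equivalently, one may invoke the universal property \eqref{universalproperty} of $F(U(A))$: $\epsilon_A$ is the unique Hopf algebra map with $\epsilon_A \circ \eta_{U(A)} = \mathrm{Id}_{U(A)}$, so the section is built into its definition.

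I do not expect any genuine obstacle here --- this is essentially a formal unwinding of a triangle identity. The only point requiring a small amount of care is to make sure we are working in the restricted adjunction \eqref{adj} on cocommutative objects (so that $\eta_{U(A)}$ is a morphism in $\mathsf{Coalg}_{\Bbbk,\mathrm{coc}}$, not merely in $\mathsf{Coalg}_{\Bbbk}$), which is legitimate since $F$ restricts to cocommutative coalgebras and $\mathsf{Hopf}_{\Bbbk,\mathrm{coc}}$ is monadic over $\mathsf{Coalg}_{\Bbbk,\mathrm{coc}}$, as recalled before \eqref{adj}. With that in place, the membership $\epsilon_A \in \mathcal{E}$ is immediate.
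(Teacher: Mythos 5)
Your proof is correct, and it uses the same section as the paper, namely the unit $\eta_{U(A)}$ regarded as a morphism in $\mathsf{Coalg}_{\Bbbk,\mathrm{coc}}$. The difference lies in how the identity $\epsilon_A\circ\eta_{U(A)}=\mathrm{Id}_{U(A)}$ is justified: you obtain it formally, as a triangle identity of the adjunction $F\dashv U$ (equivalently, by applying the bijection \eqref{naturaliso} to $\epsilon_A=\Phi^{-1}_{U(A),A}(\mathrm{Id}_{U(A)})$, or by the uniqueness clause in \eqref{universalproperty}), whereas the paper verifies the same identity by unwinding Takeuchi's explicit construction of $\Phi^{-1}$: it writes $\epsilon_A\pi=\varphi$ for the bialgebra map $\varphi\colon T(V)\to A$ induced by the sequence $f_0=\mathrm{Id}_{U(A)}$, $f_{i+1}=S_A f_i$ (which by cocommutativity alternates between $\mathrm{Id}_{U(A)}$ and $S_A$), and then computes $\epsilon_A\eta_{U(A)}=\varphi|_{U(A)}=\mathrm{Id}_A$. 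Your formal argument is shorter and shows the lemma needs nothing beyond the adjunction \eqref{adj} itself (together with the already-recorded fact that regular epimorphisms in $\mathsf{Hopf}_{\Bbbk,\mathrm{coc}}$ are exactly the surjections; note that the existence of the coalgebra section alone already forces surjectivity), while the paper's computation has the side benefit of describing $\epsilon_A$ concretely on the generators of $FU(A)$, which is expository rather than logically necessary. Your remark about checking that everything takes place in the restricted cocommutative adjunction is well placed and corresponds to the paper's appeal to Porst's result that \eqref{adj} restricts correctly.
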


\begin{proof}
   Using the explicit description of \eqref{canonicaliso}, we describe $\epsilon_{A}=\Phi^{-1}_{U(A),A}(\mathrm{Id}_{U(A)})$. We have the following sequence of morphisms in $\mathsf{Coalg}_{\Bbbk,\mathrm{coc}}$: $f_{0}=\mathrm{Id}_{U(A)}$, $f_{1}=S_{A}$, $f_{2}:= 
    S_{A}^{2}$ and so on. Since $A$ is cocommutative, we have $S_{A}^2=\mathrm{Id}_{A}$, hence the sequence $(f_{i})_{i\geq0}$ is given by $f_{i}=\mathrm{Id}_{U(A)}$ if $i$ is even and $f_{i}=S_{A}$ if $i$ is odd. Given the coalgebra $V:=\bigoplus_{i=0}^{\infty}{V_{i}}$ (where $V_{0}:=U(A)$ and $V_{i+1}:=V_{i}^{\mathrm{cop}}$ for all $i\geq0$), we obtain a 
    bialgebra map $\varphi:T(V)\to A$ induced by 
    $\sum_{i\geq 0}{f_{i}}:V\to A$ such that $\epsilon_{A}\pi=\varphi$, where $\pi:T(V)\to FU(A)$ is the canonical projection.
   Given $\eta_{U(A)}
   $ in $\mathsf{Coalg}_{\Bbbk,\mathrm{coc}}$, we know that $\eta_{U(A)}=\pi j$, where $j$ is the morphism $U(A)\hookrightarrow V\hookrightarrow T(V)$. Therefore, we have $\epsilon_{A}\eta_{U(A)}=\epsilon_{A}\pi j=\varphi j=\varphi|_{U(A)}=\mathrm{Id}_{A}$, hence $\epsilon_{A}$ has section $\eta_{U(A)}$ in $\mathsf{Coalg}_{\Bbbk,\mathrm{coc}}$. 
\end{proof}

We now give the following definitions, that will be useful througout the paper.

\begin{definition}\label{def:EprojectiveHopf}
    1). A cocommutative Hopf algebra $A$ is $\mathcal{E}$-\textit{projective} if for any morphism $\phi:A\to N$ in $\mathsf{Hopf}_{\Bbbk,\mathrm{coc}}$ and any morphism $p:M\to N$ in $\mathcal{E}$, there exists a morphism $\psi:A\to M$ in $\mathsf{Hopf}_{\Bbbk,\mathrm{coc}}$ such that $p\psi=\phi$:
\[\begin{tikzcd}
	& M \\
	A & N
	\arrow[from=1-2, to=2-2, "p"]
	\arrow[dashed, from=2-1, to=1-2, "\psi"]
	\arrow[from=2-1, to=2-2, "\phi"']
\end{tikzcd}\]
2). A short exact sequence 
\[
\begin{tikzcd}
	\mathbf{0} & \mathrm{Hker}(p) & P & B & \mathbf{0}
	\arrow[from=1-1, to=1-2]
	\arrow[hook, from=1-2, to=1-3, ""]
	\arrow[from=1-3, to=1-4,"p"]
	\arrow[from=1-4, to=1-5]
\end{tikzcd}
\]
in $\mathsf{Hopf}_{\Bbbk,\mathrm{coc}}$ is a $\mathcal E$-\textit{projective presentation} of $B$ if $P$ is $\mathcal E$-projective and $p$ is a morphism in $\mathcal E$. 
In particular, by Remark \ref{rmk:crossedproduct}, $P$ is isomorphic to the crossed product Hopf algebra $\mathrm{Hker}(p)\#_{\sigma}B$. We will sometimes denote a $\mathcal{E}$-projective presentation simply by $p:P\to B$. \medskip

\noindent 3). Given two $\mathcal E$-projective presentations $p:P\to B$ and $p':P'\to B'$, a \textit{morphism of $\mathcal{E}$-projective presentations} is a pair $(g,f)$ of morphisms in $\mathsf{Hopf}_{\Bbbk,\mathrm{coc}}$ making the following diagram commute
\begin{equation}\label{morphismpresentations}
\begin{tikzcd}
	\mathrm{Hker}(p) & P & B \\
	\mathrm{Hker}(p') & P' & B'
	\arrow[hook, from=1-1, to=1-2]
	\arrow[from=1-1, to=2-1, "\overline{g}"']
	\arrow[from=1-2, to=1-3, "p"]
	\arrow[from=1-2, to=2-2, "g"']
	\arrow[from=1-3, to=2-3, "f"]
	\arrow[hook, from=2-1, to=2-2]
	\arrow[from=2-2, to=2-3, "p'"']
\end{tikzcd}
\end{equation}
where $\overline{g}:=g|_{\mathrm{Hker}(p)}$. The composition of two morphisms of $\mathcal{E}$-projective presentations is clear.

We denote the category of $\mathcal{E}$-projective presentations in $\mathsf{Hopf}_{\Bbbk,\mathrm{coc}}$ and morphisms between them by $\mathsf{Pr}(\mathsf{Hopf}_{\Bbbk,\mathrm{coc}})$.
\end{definition}

\begin{remark}
    Given an arbitrary cocommutative Hopf algebra $A$, a morphism $\phi:A\to N$ in $\mathsf{Hopf}_{\Bbbk,\mathrm{coc}}$ and a morphism $p:M\to N$ in $\mathcal{E}$ with section $i:N\to M$ in $\mathsf{Coalg}_{\Bbbk,\mathrm{coc}}$, the morphism $i\phi:A\to M$ satisfies $pi\phi=\phi$. We observe that $i\phi$ is a morphism in $\mathsf{Coalg}_{\Bbbk,\mathrm{coc}}$ but it is in general not a morphism in $\mathsf{Hopf}_{\Bbbk,\mathrm{coc}}$.
\end{remark}

Using the free functor $F:\mathsf{Coalg}_{\Bbbk,\mathrm{coc}}\to\mathsf{Hopf}_{\Bbbk,\mathrm{coc}}$, one obtains $\mathcal{E}$-projective Hopf algebras, as it is shown by the following result.

\begin{lemma}\label{lem:projectiveobjects}
For any $C$ in $\mathsf{Coalg}_{\Bbbk,\mathrm{coc}}$, the free Hopf algebra $F(C)$ is $\mathcal{E}$-projective.
\end{lemma}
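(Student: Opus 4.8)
The strategy is to combine the adjunction \eqref{adj} with the key fact established in Lemma \ref{lemma:counitadj}, namely that every component $\epsilon_A$ of the counit of $F \dashv U$ belongs to $\mathcal E$. Suppose we are given a morphism $\phi \colon F(C) \to N$ in $\mathsf{Hopf}_{\Bbbk,\mathrm{coc}}$ together with a morphism $p \colon M \to N$ in $\mathcal E$, with section $i \colon N \to M$ in $\mathsf{Coalg}_{\Bbbk,\mathrm{coc}}$. We want to lift $\phi$ along $p$. The point is that, at the level of coalgebras, the obstruction disappears: the composite $i \circ \phi \circ \eta_C \colon C \to U(M)$ is a morphism in $\mathsf{Coalg}_{\Bbbk,\mathrm{coc}}$, and by the universal property \eqref{universalproperty} of the free Hopf algebra it extends uniquely to a Hopf algebra morphism $\psi \colon F(C) \to M$ with $\psi \circ \eta_C = i \circ \phi \circ \eta_C$.

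It then remains to check that $p \circ \psi = \phi$. Both $p \circ \psi$ and $\phi$ are morphisms in $\mathsf{Hopf}_{\Bbbk,\mathrm{coc}}$ with domain $F(C)$, so by the uniqueness part of the universal property \eqref{universalproperty} (equivalently, by the natural bijection $\Phi_{C,N}$ of \eqref{naturaliso}, which says a Hopf algebra map out of $F(C)$ is determined by its precomposition with $\eta_C$), it suffices to verify that $(p \circ \psi) \circ \eta_C = \phi \circ \eta_C$. But $(p \circ \psi) \circ \eta_C = p \circ (i \circ \phi \circ \eta_C) = (p \circ i) \circ \phi \circ \eta_C = \phi \circ \eta_C$, since $p \circ i = \mathrm{Id}_N$. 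Hence $p \circ \psi = \phi$, and $\psi$ is the desired lift, proving that $F(C)$ is $\mathcal E$-projective.

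I do not expect any real obstacle here: the proof is essentially formal, once one has the right two ingredients. The only point requiring a little care is to make sure that one lifts \emph{in coalgebras} — that is, to use the section $i$ of $p$ in $\mathsf{Coalg}_{\Bbbk,\mathrm{coc}}$ rather than trying to lift directly in $\mathsf{Hopf}_{\Bbbk,\mathrm{coc}}$ (which is impossible in general, as the Remark preceding the statement emphasizes) — and then to invoke the universal property of $F$ to \emph{upgrade} the coalgebra lift to a Hopf algebra morphism. In other words, the freeness of $F(C)$ is precisely what converts the coalgebra-level splitting guaranteed by membership in $\mathcal E$ into a Hopf-algebra-level lift. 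As a direct consequence, since by Lemma \ref{lemma:counitadj} every $\epsilon_A$ lies in $\mathcal E$ and $\epsilon_A \colon FU(A) \to A$ is surjective, the short exact sequence determined by $\epsilon_A$ gives an $\mathcal E$-projective presentation of any $A$, so $\mathsf{Hopf}_{\Bbbk,\mathrm{coc}}$ has enough $\mathcal E$-projectives.
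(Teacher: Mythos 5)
Your proof is correct and is essentially the same argument as the paper's: lift $\phi\eta_C$ through the coalgebra section $i$, extend $i\phi\eta_C$ to a Hopf algebra map $\psi\colon F(C)\to M$ by the universal property \eqref{universalproperty}, and conclude $p\psi=\phi$ by the uniqueness part of that property. (The reference to Lemma \ref{lemma:counitadj} is not needed for the projectivity statement itself, only for your closing remark that there are enough $\mathcal E$-projectives.)
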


\begin{proof}
Let $C$ be an object in $\mathsf{Coalg}_{\Bbbk,\mathrm{coc}}$, $\phi:F(C)\to N$ a morphism in $\mathsf{Hopf}_{\Bbbk,\mathrm{coc}}$ and $p:M\to N$ a morphism in $\mathcal{E}$ with section $i:U(N)\to U(M)$ in $\mathsf{Coalg}_{\Bbbk,\mathrm{coc}}$. Then, using \eqref{naturaliso}, we obtain a morphism $\Phi_{C,N}(\phi):=\phi\eta_{C}:C\to U(N)$ in $\mathsf{Coalg}_{\Bbbk,\mathrm{coc}}$ 
and then $i\Phi_{C,N}(\phi)=i\phi\eta_{C}:C\to U(M)$ is a morphism in $\mathsf{Coalg}_{\Bbbk,\mathrm{coc}}$. 
By the universal property of $F(C)$ (see \eqref{universalproperty}), there exists a unique morphism $\psi:F(C)\to M$ in $\mathsf{Hopf}_{\Bbbk,\mathrm{coc}}$ such that $\psi\eta_{C}=i\phi\eta_{C}$. Therefore, we get $p\psi\eta_{C}=pi\phi\eta_{C}=\phi\eta_{C}$ and then, since $p\psi$ and $\phi$ are morphisms in $\mathsf{Hopf}_{\Bbbk,\mathrm{coc}}$, by uniqueness we obtain $p\psi=\phi$.
\end{proof}

The following result shows some stability properties of the class $\mathcal{E}$ of cleft extensions of Hopf algebras, that will be useful in the following. 

\begin{lemma}\label{thm:classE}
The following conditions are satisfied:
\begin{itemize}
    \item[1)] $\mathcal{E}$ contains the isomorphisms in $\mathsf{Hopf}_{\Bbbk,\mathrm{coc}}$;\medskip
    \item[2)] $\mathcal{E}$ is pullback stable; \medskip
    \item[3)] $\mathcal{E}$ is closed under composition; \medskip
    \item[4)] if $gf$ is in $\mathcal{E}$, then $g$ is also in $\mathcal{E}$. 
\end{itemize} 
\end{lemma}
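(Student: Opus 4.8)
The plan is to verify each of the four conditions in turn, using the characterization of $\mathcal{E}$ as the surjective morphisms in $\mathsf{Hopf}_{\Bbbk,\mathrm{coc}}$ that admit a section in $\mathsf{Coalg}_{\Bbbk,\mathrm{coc}}$, together with the fact that regular epimorphisms in $\mathsf{Hopf}_{\Bbbk,\mathrm{coc}}$ coincide with surjective maps (from \cite{GSV}). For (1), an isomorphism is certainly surjective and its inverse, being a morphism of Hopf algebras, is in particular a morphism of coalgebras; so it lies in $\mathcal{E}$.

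For (3), suppose $p\colon M\to N$ and $q\colon N\to L$ are in $\mathcal{E}$, with coalgebra sections $i\colon N\to M$ and $s\colon L\to N$. Then $qp$ is surjective (composition of surjections), and $is\colon L\to M$ is a composite of coalgebra morphisms satisfying $qp\cdot is = q s = \mathrm{Id}_L$, so $qp\in\mathcal{E}$. For (4), if $gf$ is in $\mathcal{E}$ with $f\colon X\to Y$, $g\colon Y\to Z$, then $gf$ is surjective, hence $g$ is surjective; and if $t\colon Z\to X$ is a coalgebra section of $gf$, then $ft\colon Z\to Y$ is a coalgebra morphism with $g\cdot ft = (gf)t = \mathrm{Id}_Z$, so $g\in\mathcal{E}$.

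The one genuinely non-formal point is (2), pullback stability. Given $p\colon M\to N$ in $\mathcal{E}$ with coalgebra section $i\colon N\to M$, and an arbitrary morphism $g\colon N'\to N$ in $\mathsf{Hopf}_{\Bbbk,\mathrm{coc}}$, I want to show the pullback projection $\pi_1\colon M\times_N N'\to N'$ is again in $\mathcal{E}$. First, $\mathsf{Hopf}_{\Bbbk,\mathrm{coc}}$ is semi-abelian, hence (regular, in fact Barr-exact) so regular epimorphisms are pullback-stable; since $p$ is a regular epimorphism (being surjective), $\pi_1$ is a regular epimorphism, i.e. surjective. It remains to exhibit a coalgebra section of $\pi_1$. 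The natural candidate is $\langle ig, \mathrm{Id}_{N'}\rangle\colon N'\to M\times_N N'$: this is well-defined into the pullback because $p(ig) = g = g\,\mathrm{Id}_{N'}$, it is a morphism of coalgebras since $ig$ and $\mathrm{Id}_{N'}$ are (and the forgetful functor $U\colon\mathsf{Hopf}_{\Bbbk,\mathrm{coc}}\to\mathsf{Coalg}_{\Bbbk,\mathrm{coc}}$ preserves the relevant pullback — this is the delicate step and needs a short justification, using that $U$ preserves limits as a right adjoint, so $U(M\times_N N')$ is the pullback $U(M)\times_{U(N)}U(N')$ in $\mathsf{Coalg}_{\Bbbk,\mathrm{coc}}$), and it satisfies $\pi_1\langle ig,\mathrm{Id}_{N'}\rangle = \mathrm{Id}_{N'}$. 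Hence $\pi_1\in\mathcal{E}$.

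The main obstacle is therefore the bookkeeping in (2): one must be careful that the pullback in $\mathsf{Hopf}_{\Bbbk,\mathrm{coc}}$ is computed so that its underlying coalgebra really is the pullback of underlying coalgebras, so that the coalgebra section constructed from $i$ and $g$ genuinely lands in it; this follows because $U$ is a right adjoint (it has left adjoint $F$ by \eqref{adj}) and hence preserves all limits, in particular pullbacks. Everything else is a direct check using the two defining properties (surjectivity and the existence of a coalgebra-theoretic section) and their evident stability under composition and cancellation on the left.
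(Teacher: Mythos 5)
Your proof is correct and follows essentially the same route as the paper: parts 1), 3) and 4) are the same direct manipulations of coalgebra sections, and for 2) you use exactly the paper's key point, namely that $U$ preserves pullbacks as a right adjoint so the section of the pullback projection can be built in $\mathsf{Coalg}_{\Bbbk,\mathrm{coc}}$ (your explicit $\langle i g, \mathrm{Id}\rangle$ is just the unpacked form of the paper's appeal to pullback stability of split epimorphisms). Your separate verification that the pullback projection is surjective is redundant, since the existence of a coalgebra section already forces surjectivity, but it is harmless.
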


\begin{proof}
    $1)$. Clearly all the isomorphisms in $\mathsf{Hopf}_{\Bbbk,\mathrm{coc}}$ are contained in $\mathcal{E}$. \medskip
    
    \noindent $2)$. The forgetful functor $U:\mathsf{Hopf}_{\Bbbk,\mathrm{coc}}\to\mathsf{Coalg}_{\Bbbk,\mathrm{coc}}$ in the adjunction \eqref{adj} preserves limits. Therefore, given a morphism $f$ in $\mathcal{E}$ and any $g$ in $\mathsf{Hopf}_{\Bbbk,\mathrm{coc}}$ the image by $U$ of the pullback of $f$ and $g$ in $\mathsf{Hopf}_{\Bbbk,\mathrm{coc}}$ is the pullback of $f$ and $g$ in $\mathsf{Coalg}_{\Bbbk,\mathrm{coc}}$. The property $2)$ then easily  follows,
    since $f$ is a split epimorphism in $\mathsf{Coalg}_{\Bbbk,\mathrm{coc}}$, and split epimorphisms are always stable under pullbacks.
\medskip

\noindent $3)$. Let $f:A\to B$ and $g:B\to C$ be in $\mathcal{E}$ so that there exist $i:B\to A$ and $i':C\to B$ in $\mathsf{Coalg}_{\Bbbk,\mathrm{coc}}$ such that $fi=\mathrm{Id}_{B}$ and $gi'=\mathrm{Id}_{C}$. Then, 
$gfii'=gi'=\mathrm{Id}_{C}$, where $ii':C\to A$ is a morphism in $\mathsf{Coalg}_{\Bbbk,\mathrm{coc}}$. Thus, $gf$ is in $\mathcal{E}$. \medskip

\noindent $4)$. Let $f:A\to B$ and $g:B\to C$ be in $\mathsf{Hopf}_{\Bbbk,\mathrm{coc}}$ such that $gf:A\to C$ is in $\mathcal{E}$, so there exists a morphism $i:C\to A$ in $\mathsf{Coalg}_{\Bbbk,\mathrm{coc}}$ such that $gfi=\mathrm{Id}_{C}$. Then, $fi:C\to B$ is a morphism in $\mathsf{Coalg}_{\Bbbk,\mathrm{coc}}$ and it is a section for $g$, so $g$ is in $\mathcal{E}$. 
\end{proof}


\section{Normal extensions of cocommutative Hopf algebras}

In this section, we apply the known results on normal extensions in a semi-abelian category to 
 the adjunction 

 \begin{equation}\label{Adj-abelianisation}
\begin{tikzcd}
\mathsf{Hopf}_{\Bbbk,\mathrm{coc}}	 &&& \mathsf{Hopf}_{\Bbbk,\mathrm{coc}}^{\mathrm{com}}
	\arrow[" "{name=1, anchor=center, inner sep=0}, curve={height=-8pt}, from=1-1, to=1-4,"\mathsf{ab}" ]
	\arrow[" "{name=1, anchor=center, inner sep=0}, curve={height=-8pt}, from=1-4, to=1-1, "U"]
    \arrow["\vdash"{anchor=center, rotate=90}, draw=none, from=1-4, to=1-1]
\end{tikzcd}
\end{equation}
where $\mathsf{Hopf}^{\mathrm{com}}_{\Bbbk,\mathrm{coc}}$ denotes the abelian category of commutative and cocommutative Hopf algebras, $U$ is the forgetful functor, and $\mathsf{ab}$ is the abelianisation functor 
that sends a cocommutative Hopf algebra $A$ to the quotient $\frac{A}{A[A,A]^{+}}$, where $[A,A]$ is the Hopf subalgebra of $A$ generated as an algebra by all the elements of the form  $a_1 b_1 S(a_2) S(b_2)$, for $a,b\in A$  \cite{GSV}:
\[
[A,A ] = \langle \{a_1 b_1 S(a_2) S(b_2) \mid a,b \in A\} \rangle_A.
\]
We recall that $[A,A]$ is exactly the largest Huq commutator  \cite{Huq} of $A$ and, for any normal Hopf subalgebras $X$ and $Y$ of $A$, the Huq commutator $[X,Y]$ is a normal Hopf subalgebra of $A$, as shown in \cite[Proposition 4.2]{GSV}.

The category $\mathsf{Hopf}_{\Bbbk,\mathrm{coc}}^{\mathrm{com}}$ is reflective
in $\mathsf{Hopf}_{\Bbbk,\mathrm{coc}}$ and it is also a Birkhoff subcategory of $\mathsf{Hopf}_{\Bbbk,\mathrm{coc}}$ \cite{JK}, since it is also
closed under Hopf subalgebras and regular quotients in $\mathsf{Hopf}_{\Bbbk,\mathrm{coc}}$. As shown in \cite{JK} any Birkhoff subcategory of an exact Mal'tsev category is admissible (i.e. the Galois structure associated is admissible), hence this is the case in particular for the  reflection \eqref{Adj-abelianisation}.

Thanks to the main result in \cite{Bourn-Gran} and to the fact that $\mathsf{Hopf}_{\Bbbk,\mathrm{coc}}$ is a semi-abelian category \cite{GSV}, one has the following characterization of normal extensions relative to the adjunction \eqref{Adj-abelianisation}: these are surjective morphisms $f \colon A \rightarrow B$ in $\mathsf{Hopf}_{\Bbbk,\mathrm{coc}}$ such that $[\mathsf{Ker} (f)=\mathrm{Hker}(f), A] = \mathbf{0}$, where 
\begin{equation}\label{Huqcommutator}
[\mathrm{Hker} (f), A] = \langle \{ k_1 a_1 S(k_2) S(a_2) \, \mid \, k \in \mathrm{Hker} (f), a \in A\}  \rangle_A 
\end{equation}
is the Huq commutator of the normal Hopf subalgebras 
$\mathrm{Hker}(f)=\{x\in A\ |\ x_{1}\ot f(x_{2})=x\ot1_{B}\}$
and $A$. 
By \cite[Lemma 4.1]{GSV}, we know that $k_{1}a_{1}S(k_{2})S(a_{2})=\varepsilon(a)\varepsilon(k)1$ for all $k\in\mathrm{Hker}(f)$ and $a\in A$ is equivalent to $ka=ak$ for all $k\in\mathrm{Hker}(f)$ and $a\in A$. Hence, we have the following result.

\begin{proposition}\label{existence-universal}
A surjective morphism $f \colon A \rightarrow B$ in $\mathsf{Hopf}_{\Bbbk,\mathrm{coc}}$ is a normal extension if and only if 
$\mathrm{Hker}(f)\subseteq Z(A)$, where $Z(A)$ denotes the center of $A$.
\end{proposition}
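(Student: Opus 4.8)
The plan is to chain together two facts recalled immediately before the statement: the characterization of normal extensions coming from \cite{Bourn-Gran}, and the identity from \cite[Lemma 4.1]{GSV}. First I would note that, since $\mathsf{Hopf}_{\Bbbk,\mathrm{coc}}$ is semi-abelian and $\mathsf{Hopf}_{\Bbbk,\mathrm{coc}}^{\mathrm{com}}$ is a Birkhoff subcategory, a surjective morphism $f \colon A \to B$ is a normal extension (for the Galois structure associated with \eqref{Adj-abelianisation}) precisely when the Huq commutator $[\mathrm{Hker}(f),A]$ equals $\mathbf{0}$, i.e. coincides with the trivial Hopf subalgebra $\Bbbk 1 \subseteq A$. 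Thus the proposition reduces to the equivalence $[\mathrm{Hker}(f),A] = \mathbf{0} \iff \mathrm{Hker}(f) \subseteq Z(A)$.

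Next I would unwind the description \eqref{Huqcommutator}: the commutator $[\mathrm{Hker}(f),A]$ is the Hopf subalgebra of $A$ generated as an algebra by the elements $k_1 a_1 S(k_2) S(a_2)$ with $k \in \mathrm{Hker}(f)$ and $a \in A$. A Hopf subalgebra generated by a subset $X$ is trivial exactly when $X \subseteq \Bbbk 1$; applying the counit to a generator gives $\varepsilon(k_1 a_1 S(k_2) S(a_2)) = \varepsilon(k)\varepsilon(a)$, so $[\mathrm{Hker}(f),A] = \mathbf{0}$ is equivalent to the family of identities $k_1 a_1 S(k_2) S(a_2) = \varepsilon(k)\varepsilon(a) 1$ for all $k \in \mathrm{Hker}(f)$, $a \in A$. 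By \cite[Lemma 4.1]{GSV}, recalled above, this family of identities is in turn equivalent to $ka = ak$ for all $k \in \mathrm{Hker}(f)$ and $a \in A$, which is precisely the assertion $\mathrm{Hker}(f) \subseteq Z(A)$. Composing these equivalences proves the statement.

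The argument is essentially a matter of assembling the cited results; the only step that requires a word of justification is the passage from ``$[\mathrm{Hker}(f),A] = \mathbf{0}$'' to ``each generator equals $\varepsilon(k)\varepsilon(a)1$'', which relies on identifying the trivial Hopf subalgebra with $\Bbbk 1$ together with the counit computation above. I would state this reduction carefully and treat the remaining implications as direct appeals to \cite{Bourn-Gran} and \cite[Lemma 4.1]{GSV}.
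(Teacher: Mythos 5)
Your proposal is correct and follows exactly the route the paper takes: the paper proves the proposition in the two paragraphs immediately preceding it, by combining the Bourn--Gran characterization of normal extensions (normality $\iff [\mathrm{Hker}(f),A]=\mathbf{0}$) with \cite[Lemma 4.1]{GSV} ($k_1a_1S(k_2)S(a_2)=\varepsilon(k)\varepsilon(a)1$ for all generators $\iff ka=ak$). Your explicit justification that triviality of the generated Hopf subalgebra amounts to each generator equalling $\varepsilon(k)\varepsilon(a)1$ (via the counit computation) is a correct filling-in of a step the paper leaves implicit.
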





The unit $\eta$ of the adjunction \eqref{Adj-abelianisation} has components $\eta_{A}:A\to\frac{A}{A[A,A]^{+}}$, i.e. $\eta_{A}$ is the cokernel of the canonical inclusion $j:[A,A]\hookrightarrow A$ and $[A,A]=\mathsf{Ker}(\eta_{A})$. Clearly, the restriction of a morphism $f:A\to B$ in $\mathsf{Hopf}_{\Bbbk,\mathrm{coc}}$ to $[A,A]$ gives a morphism $f:[A,A]\to[B,B]$, hence the kernel of the unit $\eta$ 
is a subfunctor of the identity functor $\mathsf{Id}_{ \mathsf{Hopf}_{\Bbbk,\mathrm{coc}}} 
$ and we denote it as
\begin{equation}\label{def:functorkernelunit}
[\cdot] \colon \mathsf{Hopf}_{\Bbbk,\mathrm{coc}} \rightarrow \mathsf{Hopf}_{\Bbbk,\mathrm{coc}},\ A\mapsto [A]:=\mathrm{Hker}(\eta_{A}).
\end{equation}
In order to emphasize the functoriality of the operation of taking the kernels of the units we shall often use the notations $[A]$ and $[B]$ instead of $[A,A]$ and $[B,B]$.
Note also that this subfunctor $[\cdot]$ preserves regular epimorphisms, since the category $\mathsf{Hopf}^{\mathrm{com}}_{\Bbbk,\mathrm{coc}}$ is a Birkhoff subcategory of the semi-abelian category $\mathsf{Hopf}_{\Bbbk,\mathrm{coc}}$.
Given $f:A\to B$ in $\mathsf{Hopf}_{\Bbbk,\mathrm{coc}}$, we can consider the kernel pair of $f$ in $\mathsf{Hopf}_{\Bbbk,\mathrm{coc}}$
\begin{equation}\label{kernelpair}
\begin{tikzcd}
	\mathsf{Eq}(f) && {A}
	\arrow[shift left=2, from=1-1, to=1-3, "\pi_{1}"]
	\arrow[shift right=2, from=1-1, to=1-3, "\pi_{2}"']
	\arrow[from=1-3, to=1-1],
\end{tikzcd}
\end{equation}
where 
$\pi_1=\mathrm{Id}\ot\varepsilon$, $\pi_{2}=\varepsilon\ot\mathrm{Id}$ and the morphism $A\to\mathsf{Eq}(f)$ in $\mathsf{Hopf}_{\Bbbk,\mathrm{coc}}$ yielding the reflexivity of the relation is 
given by the corestriction of the comultiplication $\Delta_{A} \colon A \rightarrow A \otimes A$.

\begin{remark}\label{rmk:kernelpair}
We recall that, given morphisms $f:A\to C$, $g:B\to C$ in $\mathsf{Hopf}_{\Bbbk,\mathrm{coc}}$, the pullback of the pair $(f,g)$ is given by $(A\times_{C}B,\pi_{1},\pi_{2})$, where $A\times_{C}B$ is the Hopf subalgebra of $A\ot B$ defined by those elements $a\ot b\in A\ot B$ such that $ a_{1}\ot f(a_{2})\ot b=a\ot g(b_{1})\ot b_{2}$,
while $\pi_{1}=\mathrm{Id}\ot\varepsilon$, $\pi_{2}=\varepsilon\ot\mathrm{Id}$. 
Therefore, the kernel pair of a morphism $f:A\to B$ in $\mathsf{Hopf}_{\Bbbk,\mathrm{coc}}$ is given by $(\mathsf{Eq}(f),\pi_{1},\pi_{2})$, where $\mathsf{Eq}(f)$ is the Hopf subalgebra of $A\ot A$ defined by
\[
\mathsf{Eq}(f)=\{a\ot b\in A\ot A\ |\ a_{1}\ot f(a_{2})\ot b=a\ot f(b_{1})\ot b_{2}\}.
\]
\end{remark}

We can apply the functor 
\eqref{def:functorkernelunit} to the diagram \eqref{kernelpair} and obtain a reflexive relation 
\begin{equation}\label{EqRel'}
\begin{tikzcd}
	{[\mathsf{Eq}(f)]} && {[A]}
	\arrow[shift left=2, from=1-1, to=1-3, "{[\pi_{1}]}"]
	\arrow[shift right=2, from=1-1, to=1-3, "{[\pi_{2}]}"']
	\arrow[from=1-3, to=1-1]
\end{tikzcd}
\end{equation}
in $\mathsf{Hopf}_{\Bbbk,\mathrm{coc}}$. Since the latter is a semi-abelian category (hence, in particular, an exact Mal'tsev category), it follows that $([\mathsf{Eq}(f)], [\pi_1], [\pi_2])$ is an (effective) equivalence relation.
From the results in \cite{Bourn-Gran} and \cite{EVdL}, it follows that the normal subobject $[\mathrm{Hker}(f),A]\to A$ of $A$, where the commutator $[\mathrm{Hker}(f),A]$ is defined as in
\eqref{Huqcommutator}, can be obtained categorically by first ``normalizing'' the equivalence relation \eqref{EqRel'}
\[
\begin{tikzcd}\label{EqRel}
	[\mathsf{Ker}(f), A] \arrow[from=1-1, to=1-3, "{\mathsf{ker}([\pi_{1}])}"] && {[\mathsf{Eq}(f)]} && {[A].}
	\arrow[shift left=2, from=1-3, to=1-5, "{[\pi_{1}]}"]
	\arrow[shift right=2, from=1-3, to=1-5, "{[\pi_{2}]}"']
	\arrow[from=1-5, to=1-3]
\end{tikzcd}
\]
and then composing 
$[\pi_2] \mathsf{ker}([\pi_1 ]) \colon [\mathsf{Ker}(f), A] \rightarrow [A]$ 
with  $\mathsf{ker} (\eta_A) \colon [A] \rightarrow A$.

\section{Weak $\mathcal E$-universal normal extensions}
An important construction to define the fundamental group in the sense of categorical Galois theory \cite{Jan} is the one of the \emph{weak $\mathcal E$-universal normal extension} associated with any cocommutative Hopf algebra. 

\begin{definition}
    Let $B$ be a cocommutative Hopf algebra. A morphism $f:A\to B$ in $\mathcal{E}$ is called a \textit{weak $\mathcal{E}$-universal normal extension} of $B$ if it is a normal extension of $B$ such that for any other normal extension $g \colon C \rightarrow B $ of $B$ in $\mathcal{E}$ 
there exists a morphism $h \colon A \rightarrow C$ in $\mathsf{Hopf}_{\Bbbk,\mathrm{coc}}$ such that $g h = f$:
\[\begin{tikzcd}
	& C \\
	A & {B.}
	\arrow[from=1-2, to=2-2, "g"]
	\arrow[dashed, from=2-1, to=1-2, "h"]
	\arrow[from=2-1, to=2-2, "f"']
\end{tikzcd}\]
\end{definition}

In the next result, we prove the existence of a weak $\mathcal{E}$-universal normal extension for any cocommutative Hopf algebra.

\begin{proposition}
\label{WeakUniversal}
For any object $B$ in $\mathsf{Hopf}_{\Bbbk,\mathrm{coc}}$, there exists a weak $\mathcal E$-universal normal extension 
of $B$.
\end{proposition}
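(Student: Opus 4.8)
The strategy is the standard one from categorical Galois theory: start with any $\mathcal E$-projective presentation of $B$, and then "kill" the obstruction to normality by quotienting out the appropriate commutator. More precisely, I would proceed as follows.

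First, invoke Lemma \ref{lem:projectiveobjects} and Lemma \ref{lemma:counitadj}: take $C := U(B)$, the underlying cocommutative coalgebra of $B$, and consider $\epsilon_B \colon F(U(B)) \to B$. By Lemma \ref{lemma:counitadj} this is in $\mathcal E$, and by Lemma \ref{lem:projectiveobjects} the domain $P := F(U(B))$ is $\mathcal E$-projective. So $p := \epsilon_B \colon P \to B$ is an $\mathcal E$-projective presentation of $B$, with kernel $\mathrm{Hker}(p)$. Next, form the central extension by normalizing: let $[\mathrm{Hker}(p), P] \hookrightarrow P$ be the Huq commutator as in \eqref{Huqcommutator}, which (by \cite[Proposition 4.2]{GSV}) is a normal Hopf subalgebra of $P$ contained in $\mathrm{Hker}(p)$. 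Set $A := P / [\mathrm{Hker}(p), P]$ with quotient map $q \colon P \to A$, and let $f \colon A \to B$ be the map induced by $p$ (well-defined since $[\mathrm{Hker}(p), P] \subseteq \mathrm{Hker}(p) = \mathsf{Ker}(p)$). The candidate weak $\mathcal E$-universal normal extension is this $f$.

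I would then verify the three required properties. (i) \emph{$f$ is in $\mathcal E$}: $p = f q$ is in $\mathcal E$ and $q$ is a regular epimorphism, so $f$ is a regular epimorphism; that $f$ has a coalgebra-section follows because $q$ is a surjection of cocommutative Hopf algebras, hence (being a regular, thus strong, epimorphism) admits at least a coalgebra-theoretic splitting of the composite, or more directly one checks $\mathsf{Ker}(f) = \mathrm{Hker}(p)/[\mathrm{Hker}(p),P]$ and applies Lemma \ref{thm:classE}(4) to $p = fq$ — wait, that gives $f \in \mathcal E$ only once we know $q$-related data, so the clean argument is: $p \in \mathcal E$ and $p = f q$ forces $f \in \mathcal E$ by Lemma \ref{thm:classE}(4). (ii) \emph{$f$ is a normal extension}: by Proposition \ref{existence-universal} (equivalently the commutator characterization), I must show $[\mathrm{Hker}(f), A] = \mathbf 0$; since $q$ preserves commutators (the functor $[\cdot]$ and the Huq commutator are compatible with regular epis in a semi-abelian category — cf. the discussion around \eqref{EqRel'}) and $q(\mathrm{Hker}(p)) = \mathrm{Hker}(f)$, we get $[\mathrm{Hker}(f), A] = q\big([\mathrm{Hker}(p), P]\big) = \mathbf 0$. (iii) \emph{Weak universality}: given any normal extension $g \colon D \to B$ in $\mathcal E$, use $\mathcal E$-projectivity of $P$ against $g$ and the morphism $p \colon P \to B$ to lift to some $\psi \colon P \to D$ with $g \psi = p$; then $\psi$ sends $\mathrm{Hker}(p)$ into $\mathrm{Hker}(g)$ and hence $[\mathrm{Hker}(p), P]$ into $[\mathrm{Hker}(g), D] = \mathbf 0$ (using that $g$ is normal), so $\psi$ factors through $q$ as $h \colon A \to D$ with $g h = f$.

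The main obstacle, and the step deserving the most care, is property (ii): that the Huq commutator is preserved by the regular epimorphism $q$, i.e. that $q\big([\mathrm{Hker}(p),P]\big) = [q(\mathrm{Hker}(p)), q(P)]$. This is where semi-abelianness (in particular that $\mathsf{Hopf}_{\Bbbk,\mathrm{coc}}$ is an exact Mal'tsev category with a Birkhoff subcategory $\mathsf{Hopf}^{\mathrm{com}}_{\Bbbk,\mathrm{coc}}$) is really used; the cleanest route is to cite the categorical construction of the commutator via normalization of the equivalence relation \eqref{EqRel'}, together with the fact that the subfunctor $[\cdot]$ preserves regular epimorphisms, as recalled just before \eqref{kernelpair}. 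A secondary subtlety is checking in (i) and (iii) that the relevant lifted maps indeed respect the coalgebra sections / restrict correctly to the kernels; these are routine once the Hopf-algebraic descriptions of $\mathrm{Hker}$ and of the commutator from Section 3 are in hand. I would present (ii) carefully and treat (i), (iii) more briskly.
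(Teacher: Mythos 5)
Your proposal is correct and follows essentially the same route as the paper: the same construction $A = FU(B)/\big(FU(B)[\mathrm{Hker}(\epsilon_B),FU(B)]^{+}\big)$, membership of $f$ in $\mathcal E$ via Lemma \ref{thm:classE}(4), normality via $q(\mathrm{Hker}(p))=\mathrm{Hker}(f)$ together with preservation of the Huq commutator under regular images, and weak universality via $\mathcal E$-projectivity of $FU(B)$ plus factorization through the quotient. The only difference is cosmetic: the paper justifies $q(\mathrm{Hker}(p))=\mathrm{Hker}(f)$ explicitly by a pullback argument, which you assert but could fill in the same way.
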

\begin{proof}
Given $B$ in $\mathsf{Hopf}_{\Bbbk,\mathrm{coc}}$, consider the canonical ``free presentation'' 
 $ \epsilon_B \colon FU(B) \rightarrow B$, where $\epsilon_B$
is the $B$-component of the counit $\epsilon$ of the adjunction \eqref{adj}.
One then considers the quotient $\pi \colon FU(B) \rightarrow \frac{FU(B)}{FU(B)[ \mathrm{Hker}(\epsilon_B), FU(B) ]^+}=:A$ in $\mathsf{Hopf}_{\Bbbk,\mathrm{coc}}$ 
, i.e. $\pi$ is the cokernel of the canonical inclusion $j:[\mathrm{Hker}(\epsilon_{B}),FU(B)]\hookrightarrow FU(B)$, which induces a unique morphism $f$ in $\mathsf{Hopf}_{\Bbbk,\mathrm{coc}}$ such that $f \pi = \epsilon_B$:
\[\begin{tikzcd}
	FU(B) && B \\
	& 
    A
	\arrow[from=1-1, to=1-3, "\epsilon_B"]
	\arrow[from=1-1, to=2-2, "\pi"']
	\arrow[from=2-2, to=1-3, "{f}"']
\end{tikzcd}\]
We know that $\epsilon_B$ is in $\mathcal E$ by Lemma \ref{lemma:counitadj} and then, since $f \pi = \epsilon_B$, we get that $f$ is in $\mathcal E$ by 4) of Lemma \ref{thm:classE}. 
It is also a normal extension, i.e. $[\mathrm{Hker}(f),A]=\Bbbk1_{A}$. Indeed, consider the following commutative diagram in $\mathsf{Hopf}_{\Bbbk,\mathrm{coc}}$
\[\begin{tikzcd}
	\mathbf{0} & \mathrm{Hker}(\epsilon_{B}) & FU(B) & B & \mathbf{0}\\
\mathbf{0} & \mathrm{Hker}(f) & A & B & \mathbf{0}
	\arrow[hook, from=1-2, to=1-3, "\mathsf{ker}(\epsilon_{B})"]
	\arrow[from=1-2, to=2-2, "\overline{\pi}"']
	\arrow[from=1-3, to=2-3, "\pi"]
	\arrow[from=1-3, to=1-4, "\epsilon_{B}"]
    \arrow[from=1-4, to=1-5, "{}"]
    \arrow[from=1-1, to=1-2, "{}"]
    \arrow[from=1-4, to=2-4, "{1_B}"]
    \arrow[from=2-4, to=2-5, "{}"]
    \arrow[from=2-1, to=2-2, "{}"]
    \arrow[from=2-3, to=2-4, "f"']
    \arrow[hook, from=2-2, to=2-3, "\mathsf{ker}(f)"']
\end{tikzcd}\]
where $\overline{\pi}$ is given by the restriction of $\pi$ to $\mathrm{Hker}(\epsilon_{B})$. The left-hand square is a pullback, since  the horizontal sequences are exact and the right-hand vertical morphism is a monomorphism (it is actually the identity on $B$).
It follows that $\overline{\pi}:\mathrm{Hker}(\epsilon_{B})\to\mathrm{Hker}(f)$ is a regular epimorphism,  since $\pi$ is a regular epimorphism and regular epimorphisms are pullback-stable in the semi-abelian category $\mathsf{Hopf}_{\Bbbk,\mathrm{coc}}$. Accordingly, we obtain $\pi(\mathrm{Hker}(\epsilon_{B}))=\mathrm{Hker}(f)$, and then
\[
[\mathrm{Hker} (f), A] = [ \pi (\mathrm{Hker} (\epsilon_B)), \pi (FU(B))] = \pi( [\mathrm{Hker} (\epsilon_B), FU(B)])=\Bbbk1_{A},
\]
where the second equality immediately follows since $\pi$ is a regular epimorphism and the Huq commutator is preserved by regular images in $\mathsf{Hopf}_{\Bbbk,\mathrm{coc}}$ \cite{GSV}. 
The (weak) universal property of $f$ in the statement then follows from the property of $\mathcal E$-projectivity of $FU(B)$ (see Lemma \ref{lem:projectiveobjects}) and the universal property of the Huq commutator (as recalled in \cite[pages 4182-4183]{GSV}, for instance). Indeed, given any other normal extension $g \colon C \rightarrow B$ of $B$ in $\mathcal{E}$, the $\mathcal E$-projectivity of $FU(B)$ yields a morphism $\overline{g} \colon FU(B) \rightarrow C$ such that $g \overline{g} = \epsilon_B$.
The universal property of the Huq commutator  then implies that there is a unique $\Tilde{g} \colon A \rightarrow C$ in $\mathsf{Hopf}_{\Bbbk,\mathrm{coc}}$ such that $  \Tilde{g} \pi = \overline{g}$. The morphism $\Tilde{g}$ is such that $g \Tilde{g} \pi = g \overline{g} = \epsilon_B = f \pi $, hence $g \Tilde{g} = f$, showing that $f$ is indeed a weak $\mathcal E$-universal normal extension of $B$.
\end{proof}

By adopting the categorical approach to Galois theory, one can define the Galois groupoid $\mathsf{Gal}(f)$ of $f$, where $f:A\to B$ is a weak $\mathcal{E}$-universal normal extension of a cocommutative Hopf algebra $B$, that always exists (thanks to the previous result). Consider the kernel pair $(\mathsf{Eq}(f),\pi_1 ,\pi_2 )$ of 
$f$ in $\mathsf{Hopf}_{\Bbbk,\mathrm{coc}}$
as in \eqref{kernelpair}. 

The \textit{Galois groupoid} $\mathsf{Gal}(f)$ of $f$ (with respect to the adjunction \eqref{Adj-abelianisation}) is defined as the image 
of $(\mathsf{Eq}(f),\pi_1 ,\pi_2 ) $
through the functor $\mathsf{ab} \colon \mathsf{Hopf}_{\Bbbk,\mathrm{coc}} \rightarrow \mathsf{Hopf}_{\Bbbk,\mathrm{coc}}^{\mathrm{com}}$:
\begin{equation}\label{Gal(f)} \begin{tikzcd}
	\mathsf{ab}(\mathsf{Eq}(f)) && {\mathsf{ab}(A).}
	\arrow[shift left=2, from=1-1, to=1-3, "\mathsf{ab}(\pi_1)"]
	\arrow[shift right=2, from=1-1, to=1-3, "\mathsf{ab}(\pi_2)"']
	\arrow[from=1-3, to=1-1]
\end{tikzcd}
\end{equation}
The fact that the reflexive graph \eqref{Gal(f)} is actually underlying a unique (internal) groupoid structure is due to the fact that $\mathsf{Hopf}_{\Bbbk,\mathrm{coc}}^{\mathrm{com}}$ is semi-abelian (see e.g. \cite[Theorem 8.3]{Gran}).
\begin{remark}
    In $\mathsf{Hopf}^{\mathrm{com}}_{\Bbbk,\mathrm{coc}}$, the morphisms $\mathsf{ab}(\pi_{i}):\mathsf{ab}(\mathsf{Eq}(f))\to\mathsf{ab}(A)$ are defined such that $\mathsf{ab}(\pi_i)\eta_{\mathsf{Eq}(f)}=\eta_{A}\pi_{i}$, where $\eta$ is the unit of the adjunction \eqref{Adj-abelianisation}. So they are induced by $\pi_{i}$ to the quotient $\mathsf{Eq}(f)/\mathsf{Eq}(f)[\mathsf{Eq}(f),\mathsf{Eq}(f)]^{+}$ and have codomain given by the quotient $A/A[A,A]^{+}$. We will usually denote $\mathsf{ab}(\pi_{i})$ as $\overline{\pi_{i}}$.
\end{remark}

In the next subsection, we explain how the Galois groupoid $\mathsf{Gal}(f)$ of a weak $\mathcal E$-universal normal extension $f \colon A \rightarrow B$ of $B$ allows one to establish an equivalence of categories between the category of normal $\mathcal E$-extensions of $B$ and a suitable category of discrete fibrations with codomain $\mathsf{Gal}(f)$. These observations are also based on a modified version of the categorical Galois theorem by G. Janelidze \cite{Jan-Algebra}, due to M. Duckerts-Antoine and T. Everaert \cite{DE}.

\subsection{The classification theorem for normal ${\mathcal E}$-extensions}\label{sec:classificationnormalextensions}
 First, we recall that a \emph{discrete fibration} of internal groupoids is an internal functor $(f_0, f_1, f_2) \colon H \rightarrow G$, depicted as 
\begin{equation}\label{Discrete}
\begin{tikzcd}
	H_2 && H_1 && H_0 \\
	\\
	G_2 && G_1 && G_0
	\arrow["m"{description}, from=1-1, to=1-3]
	\arrow[shift left=2, from=1-1, to=1-3,"{\pi_1}"]
	\arrow[shift right=2, from=1-1, to=1-3, "{\pi_2}"']
	\arrow[from=1-1, to=3-1, "f_2"']
	\arrow[shift left=2, from=1-3, to=1-5, "d"]
	\arrow[shift right=2, from=1-3, to=1-5,"c"']
	\arrow[from=1-3, to=3-3, "f_1"]
	\arrow["e"{description}, from=1-5, to=1-3]
	\arrow[from=1-5, to=3-5, "f_0"]
	\arrow["m"{description}, from=3-1, to=3-3]
	\arrow[shift left=2, from=3-1, to=3-3, "{\pi_1}"]
	\arrow[shift right=2, from=3-1, to=3-3, "{\pi_2}"']
	\arrow[shift left=2, from=3-3, to=3-5, "d"]
	\arrow[shift right=2, from=3-3, to=3-5, "c"']
	\arrow["e"{description}, from=3-5, to=3-3]
\end{tikzcd}
\end{equation}
where the upper part of the diagram represents the groupoid $H$ (where $(H_{2},\pi_{1},\pi_{2})$ is the pullback of the pair $(d,c)$ so that $H_2$ represents the ``object of composable morphisms''), the lower part the groupoid $G$, and the square 
$$\begin{tikzcd}
	{H_1} & {H_0} \\
	{G_1} & {G_0}
	\arrow[from=1-1, to=1-2, "c"]
	\arrow[from=1-1, to=2-1, "{f_1}"']
	\arrow[from=1-2, to=2-2, "{f_0}"]
	\arrow[from=2-1, to=2-2, "c"']
\end{tikzcd}$$
is a pullback. 

We define the category $\mathsf{DisFib}(\mathsf{Grpd}({\mathsf{Hopf}_{\Bbbk,\mathrm{coc}}^{\mathrm{com}}}),G)$ as the full subcategory of the slice category $\mathsf{Grpd}(\mathsf{Hopf}_{\Bbbk,\mathrm{coc}}^{\mathrm{com}}) \downarrow G$ whose objects are the discrete fibrations with domain an internal groupoid in $ \mathsf{Hopf}_{\Bbbk,\mathrm{coc}}^{\mathrm{com}}$ and codomain a fixed internal groupoid $G$. Moreover, we denote by $\mathsf{DisFib}_{\mathsf{Split}}(\mathsf{Grpd}({\mathsf{Hopf}_{\Bbbk,\mathrm{coc}}^{\mathrm{com}}}),G)$ the full subcategory of  $\mathsf{DisFib}(\mathsf{Grpd}({\mathsf{Hopf}_{\Bbbk,\mathrm{coc}}^{\mathrm{com}}}),G)$ whose objects are given by those discrete fibrations such that $f_0$, $f_1$ and $f_2$ are \emph{split epimorphisms}.

We are now going to show that there is an equivalence of categories between the category $$\mathsf{DisFib}_{\mathsf{Split}}(\mathsf{Grpd}({\mathsf{Hopf}_{\Bbbk,\mathrm{coc}}^{\mathrm{com}}}),\mathsf{Gal}(f)),$$
where $\mathsf{Gal}(f)$ is the Galois groupoid of a weak $\mathcal E$-universal normal extension $f:A\to B$ of $B$ (defined as in \eqref{Gal(f)}), and the category $\mathsf{Norm}(B)$ of normal $\mathcal E$-extensions of $B$, i.e. those normal extensions of $B$ that belong to $\mathcal{E}$. \medskip

We will denote by $\mathsf{SSpl}^{\mathcal E}(E,p)$ the set of the extensions $ \alpha \colon C \rightarrow B$ in $\mathcal E$ such that the pullback $p^*(\alpha)$ along the morphism $p \colon E \rightarrow B$ in $\mathcal{E}$ is a \emph{trivial extension} that is also a \emph{split epimorphism} in $\mathsf{Hopf}_{\Bbbk,\mathrm{coc}}$. 
The following result is a variation of \cite[Theorem 3.3]{DE}, which is suitable for our context:
\begin{proposition}\label{discrete-fib}
    Given $B$ in $\mathsf{Hopf}_{\Bbbk,\mathrm{coc}}$, we have an equivalence of categories 
    $$\mathsf{DisFib}_{\mathsf{Split}}(\mathsf{Grpd}({\mathsf{Hopf}_{\Bbbk,\mathrm{coc}}^{\mathrm{com}}}),\mathsf{Gal}(f)) \cong \mathsf{Norm}(B),$$
where $f$ is a weak $\mathcal E$-universal normal extension of $B$.
\end{proposition}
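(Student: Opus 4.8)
The plan is to follow the template of \cite[Theorem 3.3]{DE}, adapting it to the relative setting where the distinguished class of extensions is $\mathcal{E}$ rather than all regular epimorphisms. First I would recall that the Galois structure associated with the adjunction \eqref{Adj-abelianisation} is admissible (as noted after that adjunction, since $\mathsf{Hopf}_{\Bbbk,\mathrm{coc}}^{\mathrm{com}}$ is a Birkhoff subcategory of the exact Mal'tsev category $\mathsf{Hopf}_{\Bbbk,\mathrm{coc}}$), so the categorical Galois theorem applies: for a weak $\mathcal{E}$-universal normal extension $f\colon A\to B$, pulling back along $f$ sends a normal extension $\alpha\colon C\to B$ to a trivial extension $f^*(\alpha)$, and conversely the monadic descent data for $f$ can be encoded as an action of the Galois groupoid $\mathsf{Gal}(f)$, i.e. as a discrete fibration over $\mathsf{Gal}(f)$. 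The weak universality of $f$ is precisely what is needed to see that \emph{every} normal $\mathcal{E}$-extension of $B$ is split by $f$ in the relevant sense: given a normal extension $g\colon C\to B$ in $\mathcal{E}$, weak universality provides $h\colon A\to C$ with $gh=f$, and then $f^*(g)=h^*(f^*(g))$-type reasoning shows $f^*(g)$ is a trivial extension (being a pullback of a trivial extension along a morphism in $\mathcal{E}$), which by Lemma \ref{thm:classE} also lies in $\mathcal{E}$; one must additionally check it is a split epimorphism, which is where the ``$\mathsf{SSpl}$'' and ``$\mathsf{Split}$'' decorations enter.

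Concretely, I would construct the two functors of the equivalence explicitly. In one direction, send a normal $\mathcal{E}$-extension $g\colon C\to B$ to the internal groupoid obtained by applying $\mathsf{ab}$ to the kernel pair $(\mathsf{Eq}(g),\pi_1,\pi_2)$ of $f^*(g)$ — more precisely, to the discrete fibration over $\mathsf{Gal}(f)$ given by the canonical comparison; because $f^*(g)$ is a trivial extension, the reflection $\mathsf{ab}$ turns its kernel pair into a discrete fibration over $\mathsf{Gal}(f)=\mathsf{ab}(\mathsf{Eq}(f))\rightrightarrows\mathsf{ab}(A)$, and the split-epimorphism hypothesis on $g$ (being in $\mathcal{E}$, hence split in $\mathsf{Coalg}$, and normal) ensures that $f_0,f_1,f_2$ are split epimorphisms in $\mathsf{Hopf}_{\Bbbk,\mathrm{coc}}^{\mathrm{com}}$. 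In the other direction, given a discrete fibration $(f_0,f_1,f_2)\colon H\to\mathsf{Gal}(f)$ with split components, one uses the admissibility of the Galois structure to reconstruct, via the equivalence $H^B\colon \mathcal{B}{\downarrow}I(B)\to\mathcal{A}{\downarrow}B$ restricted appropriately, a normal extension of $B$: the discrete fibration is the same as descent data for $f$, and descent along the effective descent morphism $f$ (it is a regular epimorphism, and $\mathsf{Hopf}_{\Bbbk,\mathrm{coc}}$ is semi-abelian, hence exact) produces an extension $g\colon C\to B$; admissibility guarantees $g$ is normal, and the split hypothesis guarantees $g\in\mathcal{E}$.

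After defining both functors I would verify they are mutually quasi-inverse: starting from $g\in\mathsf{Norm}(B)$, descent theory applied to the effective descent morphism $f$ recovers $g$ up to isomorphism from its descent data (this is the Bénabou–Roubaud / monadic descent statement in the exact context), and starting from a discrete fibration, applying $\mathsf{ab}$ to the kernel pair of the reconstructed extension returns the original fibration because $f^*$ of the reconstructed extension is trivial and $\mathsf{ab}$ already captures it faithfully on trivial extensions. Naturality in both directions is routine diagram-chasing.

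The main obstacle I expect is the careful bookkeeping around the class $\mathcal{E}$: in \cite{DE} the ambient class is all regular epimorphisms, whereas here one must check at each step that the morphisms produced — pullbacks $f^*(g)$, the reconstructed extension $g\colon C\to B$, the structure maps of the groupoid $H$ — actually lie in $\mathcal{E}$ and, where required, are split epimorphisms in $\mathsf{Hopf}_{\Bbbk,\mathrm{coc}}$ rather than merely in $\mathcal{E}$. The stability properties of $\mathcal{E}$ collected in Lemma \ref{thm:classE} (pullback-stability, closure under composition, the cancellation property 4)) are exactly what make this go through, together with the fact (Lemma \ref{lemma:counitadj}, Proposition \ref{WeakUniversal}) that weak $\mathcal{E}$-universal normal extensions exist and are themselves in $\mathcal{E}$; reconciling ``trivial extension that is a split epimorphism'' with the notion of trivial extension from Definition in the Galois-structure sense, i.e. identifying $\mathsf{SSpl}^{\mathcal{E}}$ with the hom-sets appearing in the classification, is the technical heart of the argument and the place where one must be most careful.
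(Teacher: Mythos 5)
Your proposal is correct and follows essentially the same route as the paper: weak $\mathcal{E}$-universality (together with \cite[Proposition 2.4]{DE} and the stability properties of Lemma \ref{thm:classE}) identifies $\mathsf{Norm}(B)$ with the extensions whose pullback along $f$ is a trivial extension that is also a split epimorphism, and the equivalence with split discrete fibrations over $\mathsf{Gal}(f)$ is obtained by applying $\mathsf{ab}$ to kernel pairs in one direction and by effective descent along $f$ (pulling back along the units of the adjunction \eqref{Adj-abelianisation}) in the other. One small correction: the discrete fibration attached to $g\colon C\to B$ has morphism object $\mathsf{ab}(\mathsf{Eq}(p_2))$, the abelianisation of the kernel pair of the \emph{other} pullback projection $p_2\colon A\times_B C\to C$ (isomorphic to $\mathsf{Eq}(f)\times_B C$), not of $f^*(g)=p_1$ as written, since only the former admits the map to $\mathsf{ab}(\mathsf{Eq}(f))$ making the required square a pullback; likewise, in the splitting argument the pullback is along the comparison morphism $h$ (not a morphism of $\mathcal{E}$), which is harmless because admissibility gives stability of trivial extensions under arbitrary pullbacks.
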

\begin{proof}
The proof consists of two main steps. 

In the first part, we prove that  
\[
\mathsf{Norm}(B)=\bigcup_{p \in \mathcal E}  \mathsf{SSpl}^{\mathcal E}(E,p) = \mathsf{SSpl}^{\mathcal E}(A,f),
\]
where $f \colon A \rightarrow B$ is a weak $\mathcal E$-universal normal extension of $B$, that always exists by Proposition \ref{WeakUniversal}.
 
To see this, let 
$\alpha \colon C \rightarrow B$ be any morphism in $\mathcal E$ belonging to $ \mathsf{SSpl}^{\mathcal E}(E,p)$, for some $p:E\to B$ in $\mathcal E$. By \cite[Proposition 2.4]{DE}, we have that $\alpha$ is a normal extension 
so that, building a weak $\mathcal E$-universal normal extension $f \colon A \rightarrow B$ as in Proposition \ref{WeakUniversal}, there exists a morphism $g \colon A \rightarrow C$ in $\mathsf{Hopf}_{\Bbbk,\mathrm{coc}}$
such that $\alpha g = f$. It follows that $f^* (\alpha) = g^* (\alpha^* (\alpha)) $ is both a trivial extension (since $\alpha^* (\alpha)$ is a trivial extension, and trivial extensions are pullback stable) and a split epimorphism (since $\alpha^* (\alpha)$ is a split epimorphism). As a consequence, $\alpha \in \mathsf{SSpl}^{\mathcal E}(A,f)$ and 

$$\bigcup_{p \in \mathcal E}  \mathsf{SSpl}^{\mathcal E}(E,p) \subseteq \mathsf{SSpl}^{\mathcal E}(A,f),$$
 so that
 $$\bigcup_{p \in \mathcal E}  \mathsf{SSpl}^{\mathcal E}(E,p) = \mathsf{SSpl}^{\mathcal E}(A,f).$$

The equality $$\mathsf{Norm}(B) = \bigcup_{p \in \mathcal E}  \mathsf{SSpl}^{\mathcal E}(E,p),$$ then follows from the following remarks: 1) any normal extension $g \colon C \rightarrow B$ in $\mathcal{E}$ belongs to $\mathsf{SSpl}^{\mathcal E}(A,f)$ for any weak $\mathcal E$-universal normal extension $f \colon A \rightarrow B$ (this follows from the fact that (split) trivial extensions are pullback stable); 2) any central extension is a normal extension, in our semi-abelian context \cite{JK}.

In the second part of the proof, we show that the categories $\mathsf{SSpl}^{\mathcal E}(A,f)$ (with obvious morphisms) and $\mathsf{DisFib}_{\mathsf{Split}}(\mathsf{Grpd}({\mathsf{Hopf}_{\Bbbk,\mathrm{coc}}^{\mathrm{com}}}),\mathsf{Gal}(f))$ are equivalent. We define a functor 
 \[
 F \colon \mathsf{SSpl}^{\mathcal E}(A,f)\rightarrow \mathsf{DisFib}_{\mathsf{Split}}(\mathsf{Grpd}({\mathsf{Hopf}_{\Bbbk,\mathrm{coc}}^{\mathrm{com}}}),\mathsf{Gal}(f)) 
 \]
which sends an object $c \colon C \rightarrow B$ in $\mathsf{SSpl}^{\mathcal E}(A,f)$ to the discrete fibration $$(\mathsf{ab}(p_1^1), \mathsf{ab}(p_1)) \colon (\mathsf{ab}(\mathsf{Eq}(p_2)) , {\mathsf{ab}({A \times_B C})) \rightarrow (\mathsf{ab}(\mathsf{Eq}(f)), \mathsf{ab}(A))}= \mathsf{Gal}(f)$$ depicted as 
\[
\begin{tikzcd}
	{\mathsf{ab}(\mathsf{Eq}(p_2)) \times_{{\mathsf{ab}({A \times_B C})}}  \mathsf{ab}(\mathsf{Eq}(p_2))} && \mathsf{ab}(\mathsf{Eq}(p_2)) && {\mathsf{ab}({A \times_B C})} \\
	\\
	{\mathsf{ab}(\mathsf{Eq}(f))  \times_{\mathsf{ab}(A)} \mathsf{ab}(\mathsf{Eq}(f)) } && \mathsf{ab}(\mathsf{Eq}(f)) && \mathsf{ab}(A)
	\arrow[from=1-1, to=1-3, ""]
	\arrow[shift left=2, from=1-1, to=1-3,""]
	\arrow[shift right=2, from=1-1, to=1-3, ""]
	\arrow[from=1-1, to=3-1]
	\arrow[shift left=2, from=1-3, to=1-5, "\mathsf{ab}(\pi'_1)"]
	\arrow[shift right=2, from=1-3, to=1-5,"\mathsf{ab}(\pi'_2)"']
	\arrow[from=1-3, to=3-3, "\mathsf{ab}(p_1^1)"]
	\arrow[from=1-5, to=1-3]
	\arrow[from=1-5, to=3-5, "\mathsf{ab}(p_1)"]
	\arrow[from=3-1, to=3-3]
	\arrow[shift left=2, from=3-1, to=3-3, " "]
	\arrow[shift right=2, from=3-1, to=3-3, ""]
	\arrow[shift left=2, from=3-3, to=3-5, "\mathsf{ab}(\pi_1)"]
	\arrow[shift right=2, from=3-3, to=3-5, "\mathsf{ab}(\pi_2)"']
	\arrow[from=3-5, to=3-3]
\end{tikzcd}
\]
where $(A \times _B C, p_1, p_2)$ is the pullback of $f$ and $c$, and $p^1_{1}:\mathsf{Eq}(p_{2})\to\mathsf{Eq}(f)$ is induced by the universal property of the pullback since $fp_{1}\pi'_{1}=cp_{2}\pi'_{1}=cp_{2}\pi'_{2}=fp_{1}\pi'_{2}$. Since the reflector $\mathsf{ab} \colon {\mathsf{Hopf}_{\Bbbk,\mathrm{coc}}} \rightarrow {\mathsf{Hopf}_{\Bbbk,\mathrm{coc}}^{\mathrm{com}}}$ preserves the pullback of $\pi_2$ along $p_1$, the internal functor $( \mathsf{ab}(p_1^1), \mathsf{ab}(p_1))$ is in fact a discrete split epimorphic fibration of internal groupoids with codomain $\mathsf{Gal}(f)$.

Let us first check that this functor $F$ is surjective on objects.
 Consider a split epimorphic discrete fibration 
 \[
\begin{tikzcd}
	{\mathsf{ab}(R)\times_{\mathsf{ab}(D) }  \mathsf{ab}(R)} && \mathsf{ab}(R) && {\mathsf{ab}(D)} \\
	\\
	{\mathsf{ab}(\mathsf{Eq}(f))  \times_{\mathsf{ab}(A)} \mathsf{ab}(\mathsf{Eq}(f)) } && \mathsf{ab}(\mathsf{Eq}(f)) && \mathsf{ab}(A)
	\arrow[from=1-1, to=1-3, ""]
	\arrow[shift left=2, from=1-1, to=1-3,""]
	\arrow[shift right=2, from=1-1, to=1-3, ""]
	\arrow[from=1-1, to=3-1]
	\arrow[shift left=2, from=1-3, to=1-5, ""]
	\arrow[shift right=2, from=1-3, to=1-5,""']
	\arrow[from=1-3, to=3-3, "r_1"]
	\arrow[from=1-5, to=1-3]
	\arrow[from=1-5, to=3-5, "r_0"]
	\arrow[from=3-1, to=3-3]
	\arrow[shift left=2, from=3-1, to=3-3, " "]
	\arrow[shift right=2, from=3-1, to=3-3, ""]
	\arrow[shift left=2, from=3-3, to=3-5, "\mathsf{ab}(\pi_1)"]
	\arrow[shift right=2, from=3-3, to=3-5, "\mathsf{ab}(\pi_2)"']
	\arrow[from=3-5, to=3-3]
\end{tikzcd}
\]
 in $\mathsf{DisFib}_{\mathsf{Split}}(\mathsf{Grpd}({\mathsf{Hopf}_{\Bbbk,\mathrm{coc}}^{\mathrm{com}}}),\mathsf{Gal}(f))$.
 By taking the pullback of $(r_1, r_0)$ along $(\eta_{\mathsf{Eq}(f)}, \eta_A)$ one gets the split epimorphic discrete fibration $$(\eta_{\mathsf{Eq}(f)}^*(r_1),\eta_{A}^*(r_0)) \colon (R,D) \rightarrow (\mathsf{Eq}(f), A).$$
 Note that the (internal) groupoid $(R,D)$ is necessarily an (internal) equivalence relation, since $(\eta_{\mathsf{Eq}(f)}^*(r_1),\eta_{A}^*(r_0))$ is a discrete fibration with codomain an equivalence relation, hence its domain $(R,D)$  has to be itself an  equivalence relation.  Since $f$ is an effective descent morphism in $\mathsf{Hopf}_{\Bbbk,\mathrm{coc}}$, there is a unique (up to isomorphism) morphism $\psi \colon D/R \rightarrow B$ corresponding to this discrete fibration, that is obtained by taking the quotient $q \colon D \rightarrow D/R$ of $D$ by the (effective) equivalence relation $R$.
 This morphism $\psi$ belongs to $\mathcal E$: indeed, since we know that the composite $ f \eta_A^*(r_0) = \psi q$
 belongs to $\mathcal E$ (by $3)$ of Lemma \ref{thm:classE}), it follows that $\psi \in \mathcal E$ (by $4)$ of Lemma \ref{thm:classE}).
The extension $\psi$ is also in $\mathsf{SSpl}^{\mathcal E}(A,f)$, since by construction $f^*(\psi)= \eta_A^*(r_0)$ is a trivial split extension.

To check that the functor $F$ is full, consider a morphism
\begin{equation}\label{morphism-discrete}
   \big((r_1,r_0) \colon (\mathsf{ab}(R), \mathsf{ab}(D)) \rightarrow \mathsf{Gal}(f)\big) \rightarrow  \big((s_1,s_0) \colon (\mathsf{ab}(S),\mathsf{ab}(E))\rightarrow \mathsf{Gal}(f)\big)
\end{equation}
  in $\mathsf{DisFib}_{\mathsf{Split}}(\mathsf{Grpd}({\mathsf{Hopf}_{\Bbbk,\mathrm{coc}}^{\mathrm{com}}}),\mathsf{Gal}(f))$.
As above, by pulling back the discrete fibrations along the units of the adjunction and by using the fact that $f$ is an effective descent morphism in $\mathsf{Hopf}_{\Bbbk,\mathrm{coc}}$, this morphism \eqref{morphism-discrete} induces  a morphism $$\alpha \colon (\psi \colon D/R \rightarrow B) \rightarrow (\phi \colon E/S \rightarrow B)$$ in $\mathsf{SSpl}^{\mathcal E}(A,f)$, that is such that  $F(\alpha)$ is the morphism in \eqref{morphism-discrete}, hence the functor $F$ is full. One then completes the proof by checking the faithfulness of the functor $F$. This latter property essentially follows from the fact that any (split) discrete fibration $ 
(s_1,s_0) : (S, E) \rightarrow  (\mathsf{Eq}(f) , A) 
 $
where $S = \eta_{\mathsf{Eq}(f)}^*(\mathsf{ab}(S) ) $ and $E = \eta_A^*(\mathsf{ab}(E))$
is such that
the pullback projections $S \rightarrow \mathsf{ab}(S)$ and $ S
 \rightarrow \mathsf{Eq}(f)$ are jointly monomorphic, as also are the pullback projections
 $E \rightarrow \mathsf{ab}(E)$ and $ E\rightarrow A$.
\end{proof}
 
Given a cocommutative Hopf algebra $B$, the normal $\mathcal E$-extensions of $B$ are so classified by those split epic discrete fibrations with codomain the Galois groupoid $\mathsf{Gal}(f)$ of a weak $\mathcal E$-universal normal extension $f:A\to B$ of $B$. We observe that a crucial role in this result is played by the existence of enough $\mathcal E$-projective objects (see Lemma \ref{lem:projectiveobjects}).

\section{Hopf formulae for 
cocommutative Hopf algebras}
In this section, we shall prove the Hopf formulae describing the fundamental group of a cocommutative Hopf algebra, by using the categorical approach developed in \cite{Jan} and the results concerning commutators in the category of cocommutative Hopf algebras obtained in \cite{GSV}.

The \textit{fundamental group} $\pi_1 (B)$ of an object $B$ in $\mathsf{Hopf}_{\Bbbk,\mathrm{coc}}$ (relative to the functor $\mathsf{ab} \colon \mathsf{Hopf}_{\Bbbk,\mathrm{coc}} \rightarrow \mathsf{Hopf}_{\Bbbk,\mathrm{coc}}^{\mathrm{com}}$) is the internal abelian group $\mathsf{Aut}_{\mathsf{Gal}(f)}(\mathbf{0})$ of the ``automorphisms of $\mathbf{0}$'' of the Galois groupoid $\mathsf{Gal}(f)$, where $f:A\to B$ is a weak $\mathcal{E}$-universal normal extension of $B$, defined via the following pullback: 
\[
\begin{tikzcd}
	\pi_1(B)  & \mathsf{ab}(\mathsf{Eq}(f)) \\
	 \mathbf{0} & \mathsf{ab}(A) \times \mathsf{ab}(A).
	\arrow[from=1-1, to=1-2]
	\arrow[from=1-1, to=2-1]
	\arrow["\lrcorner"{anchor=center, pos=0.125}, draw=none, from=1-1, to=2-2]
    \arrow[from=1-2, to=2-2, "{\langle\mathsf{ab}(\pi_1), \mathsf{ab}(\pi_2)\rangle}"]
	\arrow[from=2-1, to=2-2]
\end{tikzcd}
\]
We recall that the binary product in the category $\mathsf{Hopf}^{\mathrm{com}}_{\Bbbk,\mathrm{coc}}$ is given by the tensor product, so $\mathsf{ab}(A) \times \mathsf{ab}(A)=A/A[A,A]^{+}\ot A/A[A,A]^{+}$ and 
\[
\langle\overline{\pi_1}, \overline{\pi_2}\rangle:\frac{\mathsf{Eq}(f)}{\mathsf{Eq}(f)[\mathsf{Eq}(f),\mathsf{Eq}(f)]^{+}}\longrightarrow \frac{A}{A[A,A]^{+}}\ot\frac{A}{A[A,A]^{+}}
\]
is given by $(\overline{\pi_{1}}\ot\overline{\pi_{2}})\Delta_{\mathsf{ab}(\mathsf{Eq}(f))}$, where $\Delta_{\mathsf{ab}(\mathsf{Eq}(f))}$ is defined such that $$\Delta_{\mathsf{ab}(\mathsf{Eq}(f))}\eta_{\mathsf{Eq}(f)}=(\eta_{\mathsf{Eq}(f)}\ot\eta_{\mathsf{Eq}(f)})\Delta_{\mathsf{Eq}(f)},$$ where 
$\eta$ is the unit of the adjunction \eqref{Adj-abelianisation}. More precisely, given an element $$a\ot b+\mathsf{Eq}(f)[\mathsf{Eq}(f),\mathsf{Eq}(f)]^{+}$$ in $\mathsf{Eq}(f)/\mathsf{Eq}(f)[\mathsf{Eq}(f),\mathsf{Eq}(f)]^{+}$, we have:
\[
\begin{split}
    &\langle\overline{\pi_1},\overline{\pi_{2}}\rangle:a\ot b+\mathsf{Eq}(f)[\mathsf{Eq}(f),\mathsf{Eq}(f)]^{+}
    \longmapsto(a+ A[A,A]^{+})\ot(b+A[A,A]^{+}),
\end{split}
\]
i.e. $\langle\overline{\pi_1},\overline{\pi_{2}}\rangle\eta_{\mathsf{Eq}(f)}=\eta_{A}\ot\eta_{A}|_{\mathsf{Eq}(f)}$. By definition, $\pi_{1}(B)$ is the pullback of $u_{\mathsf{ab}(A)}\ot u_{\mathsf{ab}(A)}$ and $\langle\overline{\pi_1},\overline{\pi_{2}}\rangle$ in $\mathsf{Hopf}_{\Bbbk,\mathrm{coc}}^{\mathrm{com}}$. Therefore, $\pi_{1}(B)$ is given by those elements $\eta_{\mathsf{Eq}(f)}(a\ot b)$ in $\mathsf{ab}(\mathsf{Eq}(f))$, where $a\ot b$ is in $\mathsf{Eq}(f)$, such that
\[
1_{\mathsf{ab}(A)}\ot 1_{\mathsf{ab}(A)}\ot\eta_{\mathsf{Eq}(f)}(a\ot b)=\eta_{A}(a_{1})\ot\eta_{A}(b_{1})\ot\eta_{\mathsf{Eq}(f)}(a_{2}\ot b_{2}).
\]

\begin{remark}\label{invariant}
There is no ambiguity in defining the fundamental group $\pi_1(B)$ of a cocommutative Hopf algebra $B$ as 
$\mathsf{Aut}_{\mathsf{Gal}(f)}(\mathbf{0})$ for \emph{any} weak $\mathcal{E}$-universal normal extension $f$ of $B$. Indeed, given any other weak $\mathcal{E}$-universal normal extension $g$ of $B$, the kernel pairs $\mathsf{Eq}(f)$ of $f$ and $\mathsf{Eq}(g)$ of $g$ are equivalent (as internal equivalence relations), and this implies that  $\mathsf{Gal}(f)$ and $\mathsf{Gal}(g)$ are equivalent (as internal groupoids). It follows that 
$\mathsf{Aut}_{\mathsf{Gal}(f)}(\mathbf{0})$ is isomorphic to $\mathsf{Aut}_{\mathsf{Gal}(g)}(\mathbf{0})$, proving the ``invariance'' of $\pi_1(B)$. Therefore, the definition of $\pi_1(B)$ is independent from the choice of the weak $\mathcal{E}$-universal normal extension of $B$.
\end{remark}
 
We now give a better description of $\pi_{1}(B)$ in terms of a Hopf formula that is analogue to the well known one in the category of groups, by adapting the proofs given in \cite{Jan} to the present context. By adapting \cite[Theorem 2.1]{Jan} to our setting, we show the following result.

\begin{theorem}\label{thm:fundamentalgroup}
    Given a weak $\mathcal{E}$-universal normal extension $f \colon A \rightarrow B$ of a cocommutative Hopf algebra $B$, 
    the fundamental group $\pi_{1}(B)$ is isomorphic to the subobject $\mathsf{Ker} (f) \wedge [A,A]$, given explicitly by
\begin{equation}\label{Hopf-elements}
    \mathrm{Hker}(f)\cap[A,A]=\langle\{a_{1}b_{1}S(a_{2})S(b_{2})\in[A,A]\ |\ b_{1}a_{1}\ot f(a_{2}b_{2}S(a_{3})S(b_{3}))=ba\ot1\}\rangle_{A}.
\end{equation}
\end{theorem}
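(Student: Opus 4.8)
The plan is to follow the strategy of \cite[Theorem 2.1]{Jan}, translated into the cocommutative Hopf algebra setting. First I would recall that, by Proposition \ref{WeakUniversal} and the discussion preceding it, the Galois groupoid $\mathsf{Gal}(f)$ is the image through $\mathsf{ab}$ of the kernel pair $(\mathsf{Eq}(f), \pi_1, \pi_2)$ of the weak $\mathcal E$-universal normal extension $f \colon A \rightarrow B$, and that $\pi_1(B) = \mathsf{Aut}_{\mathsf{Gal}(f)}(\mathbf 0)$ is by definition the pullback of $\langle \overline{\pi_1}, \overline{\pi_2}\rangle \colon \mathsf{ab}(\mathsf{Eq}(f)) \rightarrow \mathsf{ab}(A) \otimes \mathsf{ab}(A)$ along the unit $u_{\mathsf{ab}(A)} \otimes u_{\mathsf{ab}(A)}$. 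The central categorical fact I would invoke is the classical description of the object of ``automorphisms of $\mathbf 0$'' of a groupoid obtained as the direct image of an (effective) equivalence relation under a Birkhoff reflection: namely $\mathsf{Aut}_{\mathsf{Gal}(f)}(\mathbf 0)$ is the normalization of the kernel pair $\mathsf{Eq}(f)$ relativized by the commutator, which is exactly $\mathsf{Ker}(f) \wedge [A]$ — this is precisely the content of the categorical computation underlying the Hopf formula, and it is where the identification $[\mathrm{Hker}(\pi_1), A] \subseteq \ldots$ and the remarks in the paragraph after \eqref{EqRel'}–\eqref{EqRel} about ``normalizing'' the equivalence relation \eqref{EqRel'} come into play. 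Concretely, since $\mathsf{ab}$ is a Birkhoff reflector and $\mathsf{Hopf}_{\Bbbk,\mathrm{coc}}$ is semi-abelian, the subobject $\pi_1(B) \rightarrow \mathsf{ab}(\mathsf{Eq}(f))$ computed by the defining pullback is the image of the kernel of $\langle \pi_1, \pi_2\rangle \colon \mathsf{Eq}(f) \rightarrow A \otimes A$ — which is nothing but the ``diagonal part'' isomorphic to $\mathsf{Ker}(f)$ sitting diagonally in $\mathsf{Eq}(f)$ — intersected with $[\mathsf{Eq}(f)]$, and then pushed down along $\eta_{\mathsf{Eq}(f)}$; tracing this through gives $\pi_1(B) \cong \mathsf{Ker}(f) \wedge [A,A]$.

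For the abstract isomorphism $\pi_1(B) \cong \mathsf{Ker}(f) \wedge [A,A]$ I would argue as follows. The kernel of $\langle \pi_1, \pi_2 \rangle$ in $\mathsf{Hopf}_{\Bbbk,\mathrm{coc}}$ is the equalizer of $\pi_1$ and $\pi_2$, which (by the reflexivity morphism $\Delta_A \colon A \rightarrow \mathsf{Eq}(f)$ and the fact that $\mathsf{Eq}(f)$ is an effective equivalence relation) is the copy of $A$ embedded diagonally; restricting to where $f$ is trivial, this diagonal copy of the relation is $\mathrm{Hker}(f)$ sitting inside $\mathsf{Eq}(f)$ via $x \mapsto x_1 \otimes S(x_2)$ (or an analogous diagonal embedding landing in $\mathsf{Eq}(f)$ because $\mathrm{Hker}(f)$ is $f$-trivial). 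Under $\eta_{\mathsf{Eq}(f)}$ this lands in $\mathsf{ab}(\mathsf{Eq}(f))$, and the condition defining $\pi_1(B)$ — that $\langle \overline{\pi_1}, \overline{\pi_2}\rangle$ applied to such an element be the unit — is exactly the statement that both $\overline{\pi_1}$ and $\overline{\pi_2}$ of the element vanish in $\mathsf{ab}(A)$, i.e.\ that the corresponding element of $A$ lies in $[A,A] = \mathsf{Ker}(\eta_A)$. Since also the element must lie in $\mathrm{Hker}(f)$ by construction, one gets $\pi_1(B) \cong \mathrm{Hker}(f) \cap [A,A]$; the fact that this is a genuine intersection of subobjects (meet in the lattice of Hopf subalgebras) follows from both $\mathrm{Hker}(f)$ and $[A,A]$ being normal Hopf subalgebras of $A$, so that their join and meet are well-behaved by \cite{GSV}.

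Finally, for the explicit formula \eqref{Hopf-elements}, I would combine the already-recalled description of the commutator $[A,A] = \langle \{a_1 b_1 S(a_2) S(b_2) \mid a, b \in A\}\rangle_A$ with the description of $\mathsf{Eq}(f)$ from Remark \ref{rmk:kernelpair} and of $\mathrm{Hker}(f)$ as $\{x \in A \mid x_1 \otimes f(x_2) = x \otimes 1_B\}$. Writing a generator $a_1 b_1 S(a_2) S(b_2)$ of $[A,A]$, the requirement that it also lie in $\mathrm{Hker}(f)$ translates, after applying $f$ to the ``second leg'' and using multiplicativity of $f$ together with $f(S(-)) = S(f(-))$, into the pair of conditions displayed inside the braces in \eqref{Hopf-elements}: the first coordinate records that in $\mathsf{Eq}(f)$ the element $b a \otimes \text{(suitable expression)}$ collapses, i.e.\ $b_1 a_1 \otimes f(a_2 b_2 S(a_3) S(b_3)) = ba \otimes 1$. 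The main obstacle I anticipate is the bookkeeping in this last step: one must be careful that the diagonal embedding $\mathrm{Hker}(f) \hookrightarrow \mathsf{Eq}(f)$ is correctly identified (it uses the antipode, since $\mathsf{Eq}(f)$ is realized inside $A \otimes A$ rather than as a literal pullback of sets), and that the $\mathcal E$-projectivity / weak universality of $f$ is genuinely only needed to guarantee that $\mathsf{Gal}(f)$ computes the honest fundamental group (Remark \ref{invariant}) — the formula itself is then a direct Sweedler-notation unwinding of the categorical meet. I would therefore keep the categorical argument for the isomorphism type separate from the element-level verification of \eqref{Hopf-elements}, invoking \cite[Theorem 2.1]{Jan} and the commutator results of \cite{GSV, Bourn-Gran} for the former and doing only the comultiplication bookkeeping for the latter.
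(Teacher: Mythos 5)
There is a genuine gap in the central step of your argument, namely the identification $\pi_1(B)=\mathrm{Hker}(\overline{\pi_1})\cap\mathrm{Hker}(\overline{\pi_2})\cong \mathrm{Hker}(f)\cap[A,A]$. First, your description of this subobject is incorrect: the kernel of $\langle \pi_1,\pi_2\rangle\colon \mathsf{Eq}(f)\to A\otimes A$ is \emph{not} the equalizer of $\pi_1$ and $\pi_2$ (the diagonal copy of $A$); since $(\pi_1,\pi_2)$ is jointly monomorphic, $\langle\pi_1,\pi_2\rangle$ is a monomorphism and its kernel is $\mathbf{0}$, so ``the image of the kernel of $\langle\pi_1,\pi_2\rangle$ intersected with $[\mathsf{Eq}(f)]$, pushed along $\eta_{\mathsf{Eq}(f)}$'' does not compute $\pi_1(B)$. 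Second, and more importantly, your argument never uses that $f$ is a \emph{normal} extension, and you even assert that the weak universality of $f$ is only needed for the invariance of $\pi_1(B)$, the formula being ``a direct unwinding of the categorical meet''. This is not so: the passage from elements of $\pi_1(B)\subseteq \mathsf{ab}(\mathsf{Eq}(f))$ to elements of $A$ lying in $\mathrm{Hker}(f)\cap[A,A]$ is exactly where normality enters. Since $f$ is normal, $\pi_1$ is a trivial extension, so the naturality square of the unit $\eta$ at $\pi_1$ is a pullback; this yields $\mathrm{Hker}(\overline{\pi_1})\cong\mathrm{Hker}(\pi_1)\cong\mathrm{Hker}(f)$ and identifies the restriction of $\overline{\pi_2}$ to this kernel with $\eta_A|_{\mathrm{Hker}(f)}$, whence $\pi_1(B)\cong \mathrm{Hker}(\eta_A|_{\mathrm{Hker}(f)})=\mathrm{Hker}(f)\cap[A,A]$. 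This is what the two pullback diagrams in the paper's proof (following \cite{Jan}) encode. Without normality the statement is simply false: for a general $\mathcal E$-projective presentation one obtains the quotient $\frac{\mathrm{Hker}(p)\cap[P,P]}{(\mathrm{Hker}(p)\cap[P,P])[\mathrm{Hker}(p),P]^{+}}$ of Proposition \ref{Pi1}, and the denominator disappears precisely because $[\mathrm{Hker}(f),A]=\mathbf 0$ for a normal extension. Your surjectivity/injectivity bookkeeping (why every element of the intersection of the two kernels in $\mathsf{ab}(\mathsf{Eq}(f))$ comes from, and corresponds bijectively to, an element of $\mathrm{Hker}(f)\cap[A,A]$) is exactly what these pullbacks settle, and it is missing from your sketch; citing \cite[Theorem 2.1]{Jan} as a black box would be acceptable, but the justification you actually offer for it is the flawed ``diagonal'' argument above.

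The final, element-level part of your proposal (restricting the generators $a_1b_1S(a_2)S(b_2)$ of $[A,A]$ by membership in $\mathrm{Hker}(f)$ and simplifying via multiplicativity of $f$ and bijectivity of the antipode in the cocommutative case) does agree with the paper's computation and is fine as stated.
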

\begin{proof}
We sketch the proof for the reader's convenience, and we refer to \cite{Jan} for further details. 
We know that a weak $\mathcal E$-universal normal extension $f:A\to B$ of $B\in\mathsf{Hopf}_{\Bbbk,\mathrm{coc}}$ exists by Proposition \ref{WeakUniversal}. By definition, 
$\pi_1(B) $ can be obtained as the intersection of the kernels $\mathrm{Hker}(\overline{\pi_1})$ and $\mathrm{Hker}(\overline{\pi_2})$, where $\overline{\pi_{1}}=\mathsf{ab}(\pi_1)$, $\overline{\pi_{2}}=\mathsf{ab}(\pi_2)$ and $(\mathsf{Eq}(f),\pi_{1},\pi_{2})$ is the kernel pair of $f$ as in \eqref{kernelpair}. Accordingly, both squares in the following diagram are pullbacks:
\[
    \begin{tikzcd}
	\pi_1(B) & \mathrm{Hker}(\overline{\pi_{2}})  & 0 \\
	\mathrm{Hker}(\overline{\pi_1}) & \frac{\mathsf{Eq}(f)}{\mathsf{Eq}(f)[\mathsf{Eq}(f),\mathsf{Eq}(f)]^{+}} & \frac{A}{A[A,A]^{+}}
	\arrow[from=1-1, to=1-2]
	\arrow[from=1-1, to=2-1]
	\arrow[from=1-2, to=1-3]
	\arrow[hook, from=1-2, to=2-2]
	\arrow[from=1-3, to=2-3]
	\arrow[hook, from=2-1, to=2-2]
	\arrow[from=2-2, to=2-3, "\overline{\pi_2}"']
\end{tikzcd}
\normalsize
\]
This implies that the external part of the following diagram is also a pullback
\[\begin{tikzcd}
	\pi_1(B) & \mathrm{Hker}(\eta_A)  & 0 \\
	\mathrm{Hker}(f) & A & \frac{A}{A[A,A]^{+}}
	\arrow[from=1-1, to=1-2]
	\arrow[from=1-1, to=2-1]
	\arrow[from=1-2, to=1-3]
	\arrow[hook,from=1-2, to=2-2]
	\arrow[from=1-3, to=2-3]
	\arrow[hook,from=2-1, to=2-2,"\mathsf{ker}(f)"']
	\arrow[from=2-2, to=2-3, "\eta_A"']
\end{tikzcd}\]
where $\eta_{A}$ is the $A$-component of the unit $\eta$ of the adjunction \eqref{Adj-abelianisation}. The right-hand square is a pullback, hence so is the left-hand square, showing that $\pi_{1}(B)$ is isomorphic to the intersection $\mathrm{Hker}(f)\cap[A,A]$ (since $\mathrm{Hker}(\eta_A) = [A,A]$). In terms of elements, $\pi_{1}(B)$ is generated as an algebra by those elements $a_{1}b_{1}S(a_{2})S(b_{2})$, with $a,b\in A$, such that
\[
a_{1}b_{1}S(a_{2})S(b_{2})\ot f(a_{3}b_{3}S(a_{4})S(b_{4}))=a_{1}b_{1}S(a_{2})S(b_{2})\ot1.
\]
The latter equality is equivalent to $S(a_{1})S(b_{1})\ot f(a_{2}b_{2}S(a_{3})S(b_{3}))=S(a)S(b)\ot1$, and then to $b_{1}a_{1}\ot f(a_{2}b_{2}S(a_{3})S(b_{3}))=ba\ot1$ since the antipode of a cocommutative Hopf algebra is bijective. This proves that the fundamental group $\pi_1(B)$ can indeed be described as in \eqref{Hopf-elements}.
\end{proof}

 We can now give a characterization of the fundamental group $\pi_{1}(B)$ of a cocommutative Hopf algebra $B$, given an arbitrary $\mathcal{E}$-projective presentation of $B$ (see 2) in Definition \ref{def:EprojectiveHopf}). We observe that this is essentially proved in \cite{Jan}, although here we are using a different class $\mathcal E$ of extensions and of projective presentations. 

\begin{proposition}\label{Pi1}
Let $p \colon P \rightarrow B$ be any $\mathcal E$-projective presentation of a cocommutative Hopf algebra $B$. Then 
\[
\pi_1 (B) \cong \frac{\mathrm{Hker}(p) \cap [P,P]}{(\mathrm{Hker}(p) \cap [P,P])[\mathrm{Hker}(p),P]^{+}}.
\]
\end{proposition}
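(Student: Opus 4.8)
The strategy is to compare the given $\mathcal E$-projective presentation $p\colon P\to B$ with a weak $\mathcal E$-universal normal extension $f\colon A\to B$ of $B$, and then transport the description of $\pi_1(B)$ from Theorem \ref{thm:fundamentalgroup} across this comparison. First I would recall from the proof of Proposition \ref{WeakUniversal} that one may take $A = P/P[\mathrm{Hker}(p'),P']^{+}$ for a suitable free presentation $p'$; but more directly, since $P$ is $\mathcal E$-projective and $f$ is in $\mathcal E$, there is a morphism $\varphi\colon P\to A$ with $f\varphi = p$, and conversely the weak universal property of $f$ applied to the normal extension obtained from $p$ by centralizing (i.e. to $A' := P/P[\mathrm{Hker}(p),P]^{+}\to B$) gives a factorization in the other direction. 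The key point to establish is that $\varphi$ restricts to a regular epimorphism $\overline{\varphi}\colon \mathrm{Hker}(p)\to\mathrm{Hker}(f)$ and, more precisely, that $A$ is (up to isomorphism) the quotient $P/P[\mathrm{Hker}(p),P]^{+}$ — this uses $\mathcal E$-projectivity of $P$ together with the universal property of the Huq commutator, exactly as in the proof of Proposition \ref{WeakUniversal}, and the fact that the subfunctor $[\cdot]$ and the commutator $[-,P]$ are preserved by regular images in $\mathsf{Hopf}_{\Bbbk,\mathrm{coc}}$.

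Granting this, the plan is as follows. By Theorem \ref{thm:fundamentalgroup}, $\pi_1(B)\cong \mathrm{Hker}(f)\cap[A,A]$. Writing $\pi\colon P\to A = P/P[\mathrm{Hker}(p),P]^{+}$ for the canonical quotient (so $f\pi = p$), one has $\pi(\mathrm{Hker}(p)) = \mathrm{Hker}(f)$ (the left-hand square of the short exact sequences is a pullback, as in Proposition \ref{WeakUniversal}, so the restriction $\overline{\pi}$ is a regular epimorphism) and $\pi([P,P]) = [A,A]$ (since $[\cdot]$ preserves regular epimorphisms). Because in the semi-abelian category $\mathsf{Hopf}_{\Bbbk,\mathrm{coc}}$ regular images of intersections of normal subobjects behave well — one needs that $\pi$ maps $\mathrm{Hker}(p)\cap[P,P]$ onto $\mathrm{Hker}(f)\cap[A,A]$, which follows because $\pi^{-1}(\mathrm{Hker}(f)) = \mathrm{Hker}(p)\cdot\mathrm{Ker}(\pi)$ and $\mathrm{Ker}(\pi) = P[\mathrm{Hker}(p),P]^{+}$ is contained in $[P,P]$ (as $[\mathrm{Hker}(p),P]\subseteq[P,P]$) — one gets a surjection $\mathrm{Hker}(p)\cap[P,P]\twoheadrightarrow \mathrm{Hker}(f)\cap[A,A]\cong\pi_1(B)$. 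Its kernel is $\mathrm{Ker}(\pi)\cap(\mathrm{Hker}(p)\cap[P,P])$; since $\mathrm{Ker}(\pi) = P[\mathrm{Hker}(p),P]^{+}\subseteq[P,P]$ and $[\mathrm{Hker}(p),P]\subseteq\mathrm{Hker}(p)$ (the commutator of two subobjects is contained in each when... actually $[\mathrm{Hker}(p),P]\subseteq\mathrm{Hker}(p)$ because $\mathrm{Hker}(p)$ is normal), this kernel is exactly $(\mathrm{Hker}(p)\cap[P,P])[\mathrm{Hker}(p),P]^{+}$, i.e. the Hopf subalgebra generated by $[\mathrm{Hker}(p),P]$ inside $\mathrm{Hker}(p)\cap[P,P]$. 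This yields the claimed isomorphism.

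The main obstacle I expect is the verification that $\pi$ carries $\mathrm{Hker}(p)\cap[P,P]$ precisely \emph{onto} $\mathrm{Hker}(f)\cap[A,A]$ with the stated kernel — that is, the exactness of
\[
\begin{tikzcd}
\mathbf 0 & (\mathrm{Hker}(p)\cap[P,P])[\mathrm{Hker}(p),P]^{+} & \mathrm{Hker}(p)\cap[P,P] & \mathrm{Hker}(f)\cap[A,A] & \mathbf 0
\arrow[from=1-1,to=1-2]
\arrow[hook,from=1-2,to=1-3]
\arrow[from=1-3,to=1-4]
\arrow[from=1-4,to=1-5]
\end{tikzcd}
\]
rather than merely of the outer pieces. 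This requires a small diagram chase using that $\mathsf{Hopf}_{\Bbbk,\mathrm{coc}}$ is semi-abelian (so that the lattice of normal subobjects is modular and regular images commute with finite joins), together with the inclusions $[\mathrm{Hker}(p),P]\subseteq\mathrm{Hker}(p)$ and $[\mathrm{Hker}(p),P]\subseteq[P,P]$; the modular law then identifies the kernel of the restricted surjection with $(\mathrm{Hker}(p)\cap[P,P])\wedge\mathrm{Ker}(\pi) = (\mathrm{Hker}(p)\cap[P,P])[\mathrm{Hker}(p),P]^{+}$. Everything else is a routine application of Theorem \ref{thm:fundamentalgroup}, Lemma \ref{thm:classE}, and the commutator calculus of \cite{GSV}; alternatively, one can phrase the whole argument as an instance of the standard comparison between $\mathcal E$-projective presentations and weak universal normal extensions, invoking Remark \ref{invariant} to see that the right-hand side does not depend on the chosen presentation.
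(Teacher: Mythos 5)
Your proposal is correct and follows essentially the same route as the paper: take the centralization $\pi\colon P\to P/P[\mathrm{Hker}(p),P]^{+}$ of the given presentation, observe (by the argument of Proposition \ref{WeakUniversal}, using $\mathcal E$-projectivity of $P$) that the induced morphism to $B$ is a weak $\mathcal E$-universal normal extension, apply Theorem \ref{thm:fundamentalgroup}, and identify $\mathrm{Hker}(f)\cap[A,A]$ with the stated quotient by pushing $\mathrm{Hker}(p)\cap[P,P]$ forward along $\pi$ --- the paper organizes the surjectivity step as a parallelepiped of pullbacks, while you compute preimages and joins in the subobject lattice, which is the same content. Two cosmetic cautions: read $\mathrm{Ker}(\pi)$ throughout as the kernel in $\mathsf{Hopf}_{\Bbbk,\mathrm{coc}}$, i.e.\ the normal Hopf subalgebra $[\mathrm{Hker}(p),P]$, not the ideal $P[\mathrm{Hker}(p),P]^{+}$ (the latter is not contained in $[P,P]$, and the quotient's vector-space kernel only equals it via Newman's bijection), and drop the claim that an arbitrary weak $\mathcal E$-universal normal extension is isomorphic to this centralization (weak universality carries no uniqueness), which your argument never actually needs since Remark \ref{invariant} allows you to compute $\pi_1(B)$ from the centralization itself.
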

\begin{proof}
Consider the centralization $f:P/P[\mathrm{Hker}(p),P]^{+}\to B$ of the $\mathcal{E}$-projective presentation $p:P\to B$ of $B$, which yields a weak $\mathcal E$-universal normal extension $f$ of $B$ such that $f\pi=p$:
\[\begin{tikzcd}
	P && \frac{P}{P[\mathrm{Hker}(p),P]^{+}} \\
	& B
	\arrow[from=1-1, to=1-3, "\pi"]
	\arrow[from=1-1, to=2-2, "p"']
	\arrow[from=1-3, to=2-2,"f"]
\end{tikzcd}\]
as in the proof of Proposition \ref{WeakUniversal}. The quotient $\pi$ induces the following commutative diagram
\[
\footnotesize
\begin{tikzcd}
	 {{\mathrm{Hker}(p) \cap [P,P]}} && {\mathrm{Hker}(f) \cap\Big[\frac{P}{P[\mathrm{Hker}(p),P]^{+}}, \frac{P}{P[\mathrm{Hker}(p),P]^{+}}\Big]} \\
	& {[P,P ]} && {\Big[\frac{P}{P[\mathrm{Hker}(p),P]^{+}}, \frac{P}{P[ \mathrm{Hker}(p),P]^{+}}\Big]} \\
	{\mathrm{Hker}(p)} && {\mathrm{Hker}(f)} \\
	& P && \frac{P}{P[\mathrm{Hker}(p),P]^{+}}
	\arrow[dashed, from=1-1, to=1-3]
    \arrow[hook, from=1-3, to=3-3]
	\arrow[hook, from=1-1, to=2-2]
	\arrow[hook, from=1-1, to=3-1]
	\arrow[hook, from=1-3, to=2-4]
	\arrow[from=2-2, to=2-4]
	\arrow[hook, from=2-2, to=4-2]
	\arrow[hook, from=2-4, to=4-4]
	\arrow[from=3-1, to=3-3]
	\arrow[hook, from=3-1, to=4-2, "\mathsf{ker}(p)"']
	\arrow[hook, from=3-3, to=4-4, "\mathsf{ker}(f)"]
	\arrow[from=4-2, to=4-4, "\pi"']
\end{tikzcd}
\normalsize
\]
The left-hand and the right-hand faces of this parallelepiped are pullbacks by definition of intersection of subobjects, and the dotted arrow is induced by the universal property of the right-hand pullback (and is then the restriction of $\pi$ to $\mathrm{Hker}(p)\cap[P,P]$). One can easily check that all the squares in the parallelipiped are pullbacks, so that the semi-abelianness of the category $\mathsf{Hopf}_{\Bbbk,\mathrm{coc}}$ and the fact that $\pi$ is a regular epimorphism in $\mathsf{Hopf}_{\Bbbk,\mathrm{coc}}$ imply that also the other three horizontal morphisms are regular epimorphisms (equivalently, surjective maps) in $\mathsf{Hopf}_{\Bbbk,\mathrm{coc}}$. 
We observe that the inclusion $[\mathrm{Hker}(p),P]\subseteq\mathrm{Hker}(p)\cap[P,P]$ is obvious: given $a\in\mathrm{Hker}(p)$ and $b\in P$, we have
\[
\begin{split}
    a_{1}b_{1}S(a_{2})S(b_{2})\ot p(a_{3}b_{3}S(a_{4})S(b_{4}))&=a_{1}b_{1}S(a_{2})S(b_{2})\ot p(a_{3})p(b_{3})Sp(a_{4})Sp(b_{4})\\&=a_{1}b_{1}S(a_{2})S(b_{2})\ot p(b_{3})Sp(b_{4})\\&=a_{1}b_{1}S(a_{2})S(b_{2})\ot1.
\end{split}
\]
Accordingly, by taking into account the description of the fundamental group $\pi_1(B)$ given in Theorem \ref{thm:fundamentalgroup}, we have the isomorphisms
\[
\begin{split}
\pi_1(B)&\cong\mathrm{Hker}(f) \cap \Big[\frac{P}{P[\mathrm{Hker}(p),P]^{+}}, \frac{P}{P[\mathrm{Hker}(p),P]^{+}}\Big]\cong\frac{\mathrm{Hker}(p) \cap [P,P]}{(\mathrm{Hker}(p) \cap [P,P])[\mathrm{Hker}(p),P]^{+}} 
\end{split}
\]
where the last follows since the surjective dotted arrow is the restriction of $\pi$.
\end{proof}

Since the previous result holds for any $\mathcal{E}$-projective presentation of $B$, the fundamental group $\pi_1(B)$ provides an ``invariant'' of $B$. We will come back to this invariance in the next subsection.

\subsection{A Stallings-Stammbach exact sequence for Hopf algebras}
The notion of \emph{Baer invariant} in the context of semi-abelian categories with enough \emph{regular projectives} \cite{EVdL,EG} was shown to lead to some interesting results in homological algebra that we now establish in the category of cocommutative Hopf algebras. In this subsection, we explain how the categorical approach to Baer invariants can be adapted to cocommutative Hopf algebras by working with the class $\mathcal E$ of cleft extensions. 
The motivation for this approach precisely comes from the main observation that the semi-abelian category $\mathsf{Hopf}_{\Bbbk,\mathrm{coc}}$ has enough $\mathcal E$-projectives (see Lemma \ref{lem:projectiveobjects}). 

\begin{definition}
A functor $F \colon \mathsf{Pr}(\mathsf{Hopf}_{\Bbbk,\mathrm{coc}}) \rightarrow \mathsf{Hopf}_{\Bbbk,\mathrm{coc}}$ is called a \emph{Baer invariant} if, given two morphisms $(g,f)$ and $(g',f')$ in $\mathsf{Pr}(\mathsf{Hopf}_{\Bbbk,\mathrm{coc}})$ from $p:P\to B$ to $p':P'\to B'$ as in \eqref{morphismpresentations}, if $f=f'$ then $F((g,f))=F((g',f))$.
\end{definition}

The invariance (up to isomorphism) of the fundamental group proved in Proposition \ref{Pi1} 
implies the following result (by adapting the arguments in \cite[Proposition 4.1]{EG} to the present situation):
\begin{proposition}
    Given any $\mathcal E$-projective presentation $p:P\to B$
of a cocommutative Hopf algebra $B$, the quotient  
\[
  \frac{\mathrm{Hker}(p) \cap [P,P]}{(\mathrm{Hker}(p) \cap [P,P])[\mathrm{Hker}(p),P]^{+}}
\]
is a \emph{Baer invariant} of $B$. Accordingly, given another $\mathcal E$-projective presentation $q:Q\to B$ of $B$, 
the following two quotients are isomorphic in $\mathsf{Hopf}_{\Bbbk,\mathrm{coc}}$:
\[
  \frac{\mathrm{Hker}(p) \cap [P,P]}{(\mathrm{Hker}(p) \cap [P,P])[\mathrm{Hker}(p),P]^{+}}   \cong \frac{\mathrm{Hker}(q) \cap [Q,Q]}{(\mathrm{Hker}(q) \cap [Q,Q])[\mathrm{Hker}(q),Q]^{+}}. 
\]
\end{proposition}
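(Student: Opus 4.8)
The plan is to exhibit the stated quotient as (the object part of) a \emph{functor} $\mathcal{V}\colon\mathsf{Pr}(\mathsf{Hopf}_{\Bbbk,\mathrm{coc}})\to\mathsf{Hopf}_{\Bbbk,\mathrm{coc}}$, to prove that $\mathcal{V}$ is a Baer invariant, and then to read off the asserted isomorphism. For the functoriality, let $(g,f)\colon p\to p'$ be a morphism of $\mathcal{E}$-projective presentations as in \eqref{morphismpresentations}. From $p'g=fp$ we get $g(\mathrm{Hker}(p))\subseteq\mathrm{Hker}(p')$; from the fact that $[\cdot]$ is a subfunctor of the identity (see \eqref{def:functorkernelunit}) we get $g([P,P])\subseteq[P',P']$; and from the functoriality of the Huq commutator of normal Hopf subalgebras (\cite[Proposition 4.2]{GSV}) we get $g([\mathrm{Hker}(p),P])\subseteq[\mathrm{Hker}(p'),P']$. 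Hence $g$ restricts to a morphism $\mathrm{Hker}(p)\cap[P,P]\to\mathrm{Hker}(p')\cap[P',P']$ carrying the normal Hopf subalgebra $[\mathrm{Hker}(p),P]$ into $[\mathrm{Hker}(p'),P']$; since the quotient in the statement is exactly the cokernel of the inclusion $[\mathrm{Hker}(p),P]\hookrightarrow\mathrm{Hker}(p)\cap[P,P]$, the morphism $g$ induces $\mathcal{V}(g,f)\colon\mathcal{V}(p)\to\mathcal{V}(p')$, and identities and composites are visibly respected.

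The key step is to show $\mathcal{V}(g,f)=\mathcal{V}(g',f)$ whenever $(g,f),(g',f)\colon p\to p'$ share their second component, adapting the proof of \cite[Proposition 4.1]{EG}. Since $p'g=fp=p'g'$, the pair $(g,g')$ factors through the kernel pair $(\mathsf{Eq}(p'),\pi_1,\pi_2)$ of $p'$ (described in Remark \ref{rmk:kernelpair}), yielding $\langle g,g'\rangle\colon P\to\mathsf{Eq}(p')$ with $\pi_1\langle g,g'\rangle=g$ and $\pi_2\langle g,g'\rangle=g'$. Applying $[\cdot]$ gives a morphism $[P,P]\to[\mathsf{Eq}(p')]$ which, postcomposed with $[\pi_1]$ and $[\pi_2]$, yields the corestrictions of $g$ and $g'$ to $[P,P]$. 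Now recall from the discussion following \eqref{EqRel'} that $\big([\mathsf{Eq}(p')],[\pi_1],[\pi_2]\big)$ is an effective equivalence relation on $[P',P']$ whose normalization is the normal Hopf subalgebra $[\mathrm{Hker}(p'),P']$ of $[P',P']$; hence the quotient $\overline{q}\colon[P',P']\twoheadrightarrow[P',P']/[\mathrm{Hker}(p'),P']$ coequalizes $[\pi_1]$ and $[\pi_2]$. Composing with $\langle g,g'\rangle|_{[P,P]}$, we deduce that $g$ and $g'$ become equal on $[P,P]$, hence a fortiori on $\mathrm{Hker}(p)\cap[P,P]$, after projecting $[P',P']$ onto $[P',P']/[\mathrm{Hker}(p'),P']$. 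Since $[\mathrm{Hker}(p'),P']\subseteq\mathrm{Hker}(p')\cap[P',P']$ (as observed in the proof of Proposition \ref{Pi1}) and $\mathsf{Hopf}_{\Bbbk,\mathrm{coc}}$ is (semi-abelian, hence) pointed protomodular, the canonical morphism $\mathcal{V}(p')\to[P',P']/[\mathrm{Hker}(p'),P']$ induced by $\mathrm{Hker}(p')\cap[P',P']\hookrightarrow[P',P']$ is a monomorphism; therefore $g$ and $g'$ already agree on $\mathrm{Hker}(p)\cap[P,P]$ after projecting onto $\mathcal{V}(p')$. As $\mathrm{Hker}(p)\cap[P,P]\twoheadrightarrow\mathcal{V}(p)$ is an epimorphism, this gives $\mathcal{V}(g,f)=\mathcal{V}(g',f)$, i.e. $\mathcal{V}$ is a Baer invariant.

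Finally, given two $\mathcal{E}$-projective presentations $p\colon P\to B$ and $q\colon Q\to B$, the $\mathcal{E}$-projectivity of $P$ and of $Q$ (note $p,q\in\mathcal{E}$) produces morphisms $u\colon P\to Q$ and $v\colon Q\to P$ with $qu=p$ and $pv=q$, so that $(u,1_B)$ and $(v,1_B)$ are morphisms in $\mathsf{Pr}(\mathsf{Hopf}_{\Bbbk,\mathrm{coc}})$. The morphisms $(vu,1_B)$ and $(1_P,1_B)$ from $p$ to $p$ share their second component, hence the Baer invariant property and functoriality give $\mathcal{V}(v,1_B)\mathcal{V}(u,1_B)=\mathcal{V}(vu,1_B)=\mathcal{V}(1_P,1_B)=1_{\mathcal{V}(p)}$, and symmetrically $\mathcal{V}(u,1_B)\mathcal{V}(v,1_B)=1_{\mathcal{V}(q)}$, whence $\mathcal{V}(p)\cong\mathcal{V}(q)$. (This isomorphism is of course also immediate from Proposition \ref{Pi1} and Remark \ref{invariant}, since $\mathcal{V}(p)\cong\pi_1(B)\cong\mathcal{V}(q)$.) The main obstacle I anticipate is the second paragraph: importing cleanly the identification---already recalled around \eqref{EqRel'}---of the normalization of $\big([\mathsf{Eq}(p')],[\pi_1],[\pi_2]\big)$ with $[\mathrm{Hker}(p'),P']$, and checking that equality modulo $[\mathrm{Hker}(p'),P']$ inside $[P',P']$ descends to equality modulo $[\mathrm{Hker}(p'),P']$ inside $\mathrm{Hker}(p')\cap[P',P']$; the functoriality bookkeeping and the comparison argument in the outer paragraphs are routine.
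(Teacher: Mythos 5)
Your proposal is correct and follows essentially the route the paper intends: the paper gives no explicit proof, deferring to an adaptation of \cite[Proposition 4.1]{EG} together with Proposition \ref{Pi1}, and your argument is precisely that adaptation carried out in full (functoriality of the quotient, the kernel-pair/normalization argument showing two morphisms of presentations over the same $f$ induce the same map, and the standard comparison of two presentations via $\mathcal{E}$-projectivity). The two delicate points you flag --- that the cokernel of $[\mathrm{Hker}(p'),P']\hookrightarrow[P',P']$ coequalizes $[\pi_1]$ and $[\pi_2]$, and that $\mathcal{V}(p')\to [P',P']/[\mathrm{Hker}(p'),P']$ is a monomorphism --- both hold in any semi-abelian category for exactly the reasons you give, so there is no gap.
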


In the following, we shall write  $\mathsf{H}_1$ for the functor sending $B$
to its abelianisation $\mathsf{H}_1(B) =\mathsf{ab}(B)$.
We also write $\mathsf{H}_2 \colon \mathsf{Hopf}_{\Bbbk,\mathrm{coc}} \rightarrow \mathsf{Hopf}_{\Bbbk,\mathrm{coc}} $ 
for the functor sending $B$ to the quotient 
\begin{equation}\label{Hopf-formula}
\mathsf{H}_2 (B) =  \frac{\mathrm{Hker}(p)\cap[P,P]}{(\mathrm{Hker}(p)\cap[P,P])[\mathrm{Hker}(p),P]^{+}},
\end{equation}
where $p:P\to B$ is an arbitrary $\mathcal{E}$-projective presentation of $B$. 

\begin{lemma}\label{Lemma-SS}
For any $\mathcal E$-projective presentation 
$p:P\to B$ of $B$ in $\mathsf{Hopf}_{\Bbbk,\mathrm{coc}}$, the following sequence in $\mathsf{Hopf}_{\Bbbk,\mathrm{coc}}$ 
\[\begin{tikzcd}
	\mathbf{0} & \mathsf{H}_2(B) & \frac{[P,P]}{[P,P][\mathrm{Hker}(p),P]^{+}} & B & \mathsf{H}_1(B)& \mathbf{0}
	\arrow[from=1-1, to=1-2]
	\arrow[from=1-2, to=1-3]
	\arrow[from=1-3, to=1-4]
	\arrow[from=1-4, to=1-5]
    \arrow[from=1-5, to=1-6]
\end{tikzcd}\]
is exact. 
\end{lemma}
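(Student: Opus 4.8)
The plan is to exhibit the displayed sequence as the \emph{splicing} of two short exact sequences in $\mathsf{Hopf}_{\Bbbk,\mathrm{coc}}$, both read off from the $\mathcal E$-projective presentation $p\colon P\to B$. Throughout, for a normal Hopf subalgebra $N$ of a cocommutative Hopf algebra $H$ I write $H/N$ for the quotient Hopf algebra $H/HN^{+}$, so that, by \eqref{Hopf-formula}, $\mathsf{H}_2(B)=(\mathrm{Hker}(p)\cap[P,P])/[\mathrm{Hker}(p),P]$ and the middle term of the sequence is $[P,P]/[\mathrm{Hker}(p),P]$. I will use repeatedly that $[\mathrm{Hker}(p),P]$ is a normal Hopf subalgebra of $P$ (by \cite[Proposition 4.2]{GSV}), that $[\mathrm{Hker}(p),P]\subseteq \mathrm{Hker}(p)\cap[P,P]$ (this containment is established inside the proof of Proposition \ref{Pi1}), and that a normal Hopf subalgebra of $P$ contained in a Hopf subalgebra $M\subseteq P$ is again normal in $M$ (being the kernel of the restriction to $M$ of the relevant quotient map). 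In particular both $[\mathrm{Hker}(p),P]$ and $\mathrm{Hker}(p)\cap[P,P]=\mathsf{Ker}\big([P,P]\hookrightarrow P\xrightarrow{p}B\big)$ are normal in $[P,P]$. Since $\mathsf{Hopf}_{\Bbbk,\mathrm{coc}}$ is semi-abelian, the usual third isomorphism theorem then yields the short exact sequence
\[
\mathbf{0}\longrightarrow \mathsf{H}_2(B)\longrightarrow \frac{[P,P]}{[\mathrm{Hker}(p),P]}\longrightarrow \frac{[P,P]}{\mathrm{Hker}(p)\cap[P,P]}\longrightarrow \mathbf{0},
\]
where the first arrow is the monomorphism induced by the inclusion $\mathrm{Hker}(p)\cap[P,P]\hookrightarrow[P,P]$ on passing to quotients by $[\mathrm{Hker}(p),P]$.

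Next I would identify the rightmost term of this sequence with $[B,B]$. Since $\mathrm{Hker}(p)\cap[P,P]=\mathsf{Ker}\big([P,P]\xrightarrow{p}B\big)$, the morphism $[P,P]/(\mathrm{Hker}(p)\cap[P,P])\to B$ induced by $p$ is a monomorphism whose image is the regular image $p([P,P])$; and $p([P,P])=[B,B]$ because $p$ is a regular epimorphism and the Huq commutator is preserved by regular images in $\mathsf{Hopf}_{\Bbbk,\mathrm{coc}}$ \cite{GSV}. Thus $[P,P]/(\mathrm{Hker}(p)\cap[P,P])\cong[B,B]$ as subobjects of $B$. On the other hand, by the very definition of the abelianisation functor, the unit $\eta_B\colon B\to \mathsf{H}_1(B)=\mathsf{ab}(B)$ of \eqref{Adj-abelianisation} is the cokernel of $[B,B]\hookrightarrow B$ and $[B,B]=\mathsf{Ker}(\eta_B)$, so that
\[
\mathbf{0}\longrightarrow [B,B]\longrightarrow B\xrightarrow{\ \eta_B\ } \mathsf{H}_1(B)\longrightarrow \mathbf{0}
\]
is a short exact sequence in $\mathsf{Hopf}_{\Bbbk,\mathrm{coc}}$. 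Splicing the two short exact sequences along the common term $[P,P]/(\mathrm{Hker}(p)\cap[P,P])\cong[B,B]=\mathsf{Ker}(\eta_B)$ produces exactly the asserted six-term sequence: exactness at $\mathsf{H}_2(B)$ and at $[P,P]/[\mathrm{Hker}(p),P]$ is inherited from the first short exact sequence; exactness at $B$ is the identity $\mathsf{Ker}(\eta_B)=[B,B]=p([P,P])$, the last being the image of the middle arrow; and exactness at $\mathsf{H}_1(B)$ is the surjectivity of $\eta_B$.

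The only point requiring genuine care is organizational: one has to check that every arrow is well defined — which reduces to the inclusions $[\mathrm{Hker}(p),P]\subseteq\mathrm{Hker}(p)\subseteq P$ and $[\mathrm{Hker}(p),P]\subseteq[P,P]$, so that $p$ and $\eta_B$ descend to the relevant quotients — and that the Hopf subalgebras involved are normal in the right ambient objects, so that all quotient Hopf algebras exist and the third isomorphism theorem applies; the crucial input $p([P,P])=[B,B]$ is already available from \cite{GSV}. Alternatively, one could verify exactness at each of the four interior spots directly in Sweedler's notation, using the explicit description \eqref{Huqcommutator} of the commutator and the coinvariant description of $\mathrm{Hker}$, but the splicing argument above bypasses these routine computations.
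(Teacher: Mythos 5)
Your proof is correct and follows essentially the same route as the paper's: both splice the short exact sequence $\mathbf{0}\to\mathsf{H}_2(B)\to[P,P]/[P,P][\mathrm{Hker}(p),P]^{+}\to[P,P]/[P,P](\mathrm{Hker}(p)\cap[P,P])^{+}\to\mathbf{0}$ coming from the double-quotient (third) isomorphism theorem with the identification of the last term as $[B,B]=\mathsf{Ker}(\eta_B)$ via $p([P,P])=[B,B]$. The only cosmetic difference is that where you invoke the regular-image factorization in the semi-abelian category to see that $[P,P]/(\mathrm{Hker}(p)\cap[P,P])$ embeds in $B$, the paper identifies the vector-space kernel of the restriction $[p]\colon[P,P]\to[B,B]$ with $[P,P](\mathrm{Hker}(p)\cap[P,P])^{+}$ via Newman's bijection; both arguments yield the same exact sequence \eqref{exseq2}.
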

\begin{proof}
Since $[\mathrm{Hker}(p),P]\subseteq\mathrm{Hker}(p)\cap[P,P]$, we have the inclusion $$[P,P][\mathrm{Hker}(p),P]^{+}\subseteq[P,P](\mathrm{Hker}(p)\cap[P,P])^{+}.$$ Then the ``double quotient isomorphism theorem''  that holds in any semi-abelian category (see for instance \cite[Lemma 1.3]{EG-monotone}) implies that the following sequence in $\mathsf{Hopf}_{\Bbbk,\mathrm{coc}}$
\begin{equation}\label{exseq1}
\begin{tikzcd}
	\mathbf{0}& \mathsf{H}_2(B) & \frac{[P,P]}{[P,P][\mathrm{Hker}(p),P]^{+}} & \frac{[P,P]}{[P,P](\mathrm{Hker}(p)\cap[ P,P])^{+}} & \mathbf{0}
	\arrow[from=1-1, to=1-2]
	\arrow[from=1-2, to=1-3] 
    \arrow[from=1-3, to=1-4]
    \arrow[from=1-4, to=1-5]
\end{tikzcd}
\end{equation}
is exact. One then considers the commutative diagram
\[\begin{tikzcd}
	  && {\mathrm{Hker}(p) \cap [P,P]} && {[P,P]} && {[B,B]} &&  \\
	\\
\mathbf{0}	 && \mathrm{Hker}(p) && P && B && \mathbf{0}
	\arrow[hook, from=1-3, to=1-5]
	\arrow[hook, from=1-3, to=3-3]
	\arrow[hook, from=1-5, to=3-5]
	\arrow[from=3-1, to=3-3]
	\arrow[hook, from=3-3, to=3-5]
    \arrow[from=3-7, to=3-9]
    \arrow[hook, from=1-7, to=3-7]
     \arrow[" ", from=1-5, to=1-7,"{[p]}"]
    \arrow[" ",from=3-5, to=3-7,"{p}"]
\end{tikzcd}\]
    where the left square is a pullback, the morphism $[p]:[P,P] \rightarrow [B,B] $ is a regular epimorphism (equivalently, a surjective map) in $\mathsf{Hopf}_{\Bbbk,\mathrm{coc}}$ (since so is $p$ and the functor $[\cdot] \colon \mathsf{Hopf}_{\Bbbk,\mathrm{coc}} \rightarrow \mathsf{Hopf}_{\Bbbk,\mathrm{coc}}$ defined as in \eqref{def:functorkernelunit} preserves regular epimorphisms), and the morphism $ [B,B] \hookrightarrow B$ is a monomorphism (equivalenty, an injective map) in $\mathsf{Hopf}_{\Bbbk,\mathrm{coc}}$. It follows that the upper sequence is also 
    exact, so $\mathrm{Hker}([p])=\mathrm{Hker}(p)\cap[P,P]$ and then the vector space kernel of $[p]$ is given by $\mathrm{ker}([p])=[P,P]\mathrm{Hker}([p])^{+}=[P,P](\mathrm{Hker}(p)\cap[P,P])^{+}$, where the first equality follows by the Newman's bijection \cite[Theorem 4.1]{Ne}.
    It then follows that the sequence 
\begin{equation}\label{exseq2}
\begin{tikzcd}
	\mathbf{0} & \frac{[P,P]}{[P,P](\mathrm{Hker}(p)\cap [P,P])^{+}}=
    \frac{[P,P]}{\mathrm{ker}([p])} & B & \mathsf{H}_1(B) & \mathbf{0}
    \arrow[from=1-1, to=1-2]
	\arrow[from=1-2, to=1-3]
	\arrow[from=1-3, to=1-4] \arrow[from=1-4, to=1-5]
\end{tikzcd}
\end{equation}
    is exact. Hence, from \eqref{exseq1} and \eqref{exseq2}, we get the thesis.
\end{proof}

Finally, we can provide a five-term exact sequence, starting from an arbitrary morphism in $\mathcal{E}$. This represents a “Hopf-theoretic” version of the classical \textit{Stallings-Stammbach 5-term exact sequence} that one has in the case of groups.

\begin{theorem}\label{SS}
Any short exact sequence in $\mathsf{Hopf}_{\Bbbk,\mathrm{coc}}$
\[\begin{tikzcd}\label{SES}
	\mathbf{0} & 
    \mathrm{Hker}(f) & A & B & \mathbf{0}
	\arrow[from=1-1, to=1-2]
	\arrow[hook, from=1-2, to=1-3]
	\arrow[from=1-3, to=1-4,"f"]
    \arrow[from=1-4, to=1-5],
\end{tikzcd}\]
where $f \in \mathcal E$, induces the five-term exact sequence in $\mathsf{Hopf}_{\Bbbk,\mathrm{coc}}$
\[\begin{tikzcd}
	\mathsf{H}_2(A) & \mathsf{H}_2(B) & \frac{\mathrm{Hker}(f)}{\mathrm{Hker}(f)[\mathrm{Hker}(f),A]^{+}} & \mathsf{H}_1(A) & \mathsf{H}_1(B) & \mathbf{0}
	\arrow[" ", from=1-2, to=1-3, ""]
	\arrow[" ",from=1-3, to=1-4," "]
    \arrow[" ", from=1-1, to=1-2,"\mathsf{H}_2(f) " ]
	\arrow[" ", from=1-4, to=1-5,"\mathsf{H}_1(f) " ]
    \arrow[" ",from=1-3, to=1-4," "]
    \arrow[" ",from=1-5, to=1-6," "].
\end{tikzcd}\]
The exact sequence depends naturally on the given exact sequence.
\end{theorem}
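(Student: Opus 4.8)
\emph{Proof strategy (sketch).} The plan is to run the classical Baer-invariant proof of the Stallings--Stammbach sequence inside the semi-abelian category $\mathsf{Hopf}_{\Bbbk,\mathrm{coc}}$, with the class $\mathcal E$ of cleft extensions playing the role of all regular epimorphisms, resting everything on Lemma~\ref{Lemma-SS} and the Hopf formula \eqref{Hopf-formula}. First I would pick an $\mathcal E$-projective presentation $p\colon P\to A$ (one exists since $\epsilon_A\colon FU(A)\to A$ is such, by Lemmas~\ref{lemma:counitadj} and \ref{lem:projectiveobjects}). Since $f\in\mathcal E$ and $\mathcal E$ is closed under composition (by 3) of Lemma~\ref{thm:classE}), the composite $q:=fp\colon P\to B$ is again a morphism in $\mathcal E$ with $\mathcal E$-projective domain, hence an $\mathcal E$-projective presentation of $B$. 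Writing $K:=\mathrm{Hker}(p)$ and $L:=\mathrm{Hker}(q)$, one has $K\subseteq L$, and since regular epimorphisms are pullback stable in $\mathsf{Hopf}_{\Bbbk,\mathrm{coc}}$ the restriction of $p$ gives a short exact sequence $\mathbf 0\to K\to L\to\mathrm{Hker}(f)\to\mathbf 0$. All the subobjects occurring below --- $K$, $L$, $[P,P]$, $[K,P]$, $[L,P]$ and the joins (written $\vee$) among them --- are normal Hopf subalgebras of $P$ (for the commutators by \cite[Proposition~4.2]{GSV}, and $[P,P]=\mathrm{Hker}(\eta_P)$), so the double quotient isomorphism theorem \cite[Lemma~1.3]{EG-monotone} and the modularity of the lattice of normal subobjects of $P$ are available. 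Using that the regular epimorphism $p$ preserves joins and Huq commutators \cite{GSV}, I identify $[A,A]=p([P,P])$, $[\mathrm{Hker}(f),A]=p([L,P])$, hence $\mathsf{H}_1(A)\cong P/(K\vee[P,P])$, $\mathsf{H}_1(B)\cong P/(L\vee[P,P])$ and the middle term $\frac{\mathrm{Hker}(f)}{\mathrm{Hker}(f)[\mathrm{Hker}(f),A]^+}\cong L/(K\vee[L,P])$; and \eqref{Hopf-formula} gives $\mathsf{H}_2(A)\cong(K\cap[P,P])/[K,P]$, $\mathsf{H}_2(B)\cong(L\cap[P,P])/[L,P]$ (here $N/M$ denotes the quotient Hopf algebra of $N$ by a normal Hopf subalgebra $M$).

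Next I would exhibit the five maps as the morphisms of quotients induced, respectively, by the inclusions $K\cap[P,P]\hookrightarrow L\cap[P,P]$ (this is $\mathsf{H}_2(f)$, namely $\mathsf{H}_2$ computed from the presentation morphism $(1_P,f)$ as in \eqref{morphismpresentations}), by $L\cap[P,P]\hookrightarrow L$ (the connecting morphism $\mathsf{H}_2(B)\to L/(K\vee[L,P])$), by $L\hookrightarrow P$ (the morphism $L/(K\vee[L,P])\to\mathsf{H}_1(A)$), and by $K\vee[P,P]\hookrightarrow L\vee[P,P]$ (this is $\mathsf{H}_1(f)=\mathsf{ab}(f)$). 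Well-definedness of each arrow, and the vanishing of consecutive composites, follow from the chains $[K,P]\subseteq K\cap[P,P]\subseteq K\subseteq K\vee[L,P]$, $[L,P]\subseteq L\cap[P,P]\subseteq L$ and $[P,P]\subseteq K\vee[P,P]\subseteq L\vee[P,P]$, where the first inclusion in each of the first two chains is the one already established in the proof of Proposition~\ref{Pi1}. Exactness at $\mathsf{H}_1(B)$ is the surjectivity of $\mathsf{ab}(f)$, which holds because $f$ is a regular epimorphism and $\mathsf{ab}$ preserves them.

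Exactness at the three remaining spots reduces, after the routine translation into subobjects of $P$, to three identities of normal subobjects, each an instance of the modular law combined with the absorption of $[L,P]$ into the joins in play. Concretely: (i) at $\mathsf{H}_1(A)$, the image of $L/(K\vee[L,P])$ in $\mathsf{H}_1(A)=P/(K\vee[P,P])$ is $(L\vee K\vee[P,P])/(K\vee[P,P])=(L\vee[P,P])/(K\vee[P,P])$ (using $K\subseteq L$), which is exactly $\mathrm{Hker}(\mathsf{ab}(f))$; (ii) at the middle term, one needs $L\cap(K\vee[P,P])=(L\cap[P,P])\vee K\vee[L,P]$, which follows from the modular law $(K\vee[P,P])\cap L=K\vee(L\cap[P,P])$ (valid since $K\subseteq L$) together with $(L\cap[P,P])\vee[L,P]=L\cap[P,P]$ (since $[L,P]\subseteq L\cap[P,P]$), both sides then equalling $K\vee(L\cap[P,P])$; (iii) at $\mathsf{H}_2(B)$, one needs $(L\cap[P,P])\cap(K\vee[L,P])=(K\cap[P,P])\vee[L,P]$, which follows from the modular law $(K\vee[L,P])\cap(L\cap[P,P])=[L,P]\vee\bigl(K\cap(L\cap[P,P])\bigr)$ (valid since $[L,P]\subseteq L\cap[P,P]$) together with $K\cap(L\cap[P,P])=K\cap[P,P]$ (since $K\subseteq L$) --- and this last subobject is precisely the image of $\mathsf{H}_2(f)$.

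Finally, for naturality I would note that a morphism of short exact sequences as in the statement, with both right-hand maps in $\mathcal E$, lifts --- by the $\mathcal E$-projectivity of $P$ --- to a morphism of the chosen $\mathcal E$-projective presentations, inducing a morphism between the corresponding five-term sequences; independence of the whole construction from the chosen presentations and lifts follows from the same comparison argument that yields the Baer invariance of $\mathsf{H}_2$ in Proposition~\ref{Pi1}. I expect the main obstacle to be the two ``crossing'' exactness verifications (ii) and (iii): one must be confident that the relevant sublattice of normal Hopf subalgebras of $P$ is genuinely modular and that the commutator subobjects $[K,P]$ and $[L,P]$ are normal in $P$ and get absorbed into the joins exactly as claimed. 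The rest --- identifying the five quotients via Lemma~\ref{Lemma-SS}, defining the maps, and checking exactness at $\mathsf{H}_1(A)$ and $\mathsf{H}_1(B)$ --- is bookkeeping resting on the preservation of joins and Huq commutators by regular epimorphisms.
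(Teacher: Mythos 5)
Your proposal is correct, and it reaches the five-term sequence by a genuinely different final step than the paper. The setup is the same: choose an $\mathcal E$-projective presentation $p\colon P\to A$, note that $fp\colon P\to B$ is then an $\mathcal E$-projective presentation of $B$ by 3) of Lemma~\ref{thm:classE}, and use that the regular epimorphism $p$ preserves Huq commutators and satisfies $p(\mathrm{Hker}(fp))=\mathrm{Hker}(f)$. From there the paper assembles a morphism of short exact sequences, with top row $\mathbf{0}\to \frac{[\mathrm{Hker}(fp),P]}{[\mathrm{Hker}(fp),P][\mathrm{Hker}(p),P]^{+}}\to \frac{[P,P]}{[P,P][\mathrm{Hker}(p),P]^{+}}\to \frac{[P,P]}{[P,P][\mathrm{Hker}(fp),P]^{+}}\to\mathbf{0}$ (exact by the double quotient isomorphism theorem) mapping down to the given sequence, identifies the kernels and cokernels of the three vertical arrows via Lemma~\ref{Lemma-SS} and the computation of the image of $a$, and then invokes Bourn's Snake Lemma \cite{Bourn-Snake} to produce the connecting morphism and all exactness statements at once. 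You instead realize all five terms as subquotients of $P$ in the lattice of normal Hopf subalgebras and verify exactness by hand through the modular law; your identities (i)--(iii) are the right ones, and the modularity you rely on does hold, since $\mathsf{Hopf}_{\Bbbk,\mathrm{coc}}$ is semi-abelian (hence Mal'tsev), so the lattice of normal subobjects of $P$, being isomorphic to the lattice of equivalence relations on $P$, is modular; the normality of $[K,P]$ and $[L,P]$ is \cite[Proposition 4.2]{GSV}. The trade-off is clear: the Snake Lemma route is shorter and delegates the construction of the connecting morphism $\mathsf{H}_2(B)\to \frac{\mathrm{Hker}(f)}{\mathrm{Hker}(f)[\mathrm{Hker}(f),A]^{+}}$ to a cited general result, whereas your route makes every map and every exactness check explicit and elementary (and makes the independence from the chosen presentation and the naturality more transparent), at the cost of the two ``crossing'' modular-law computations you correctly single out as the delicate points. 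Both arguments produce the same sequence with the same morphisms.
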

\begin{proof}
We already know that $\mathsf{H}_2(A)$ and $\mathsf{H}_2(B)$ do not depend on the choice of the $\mathcal{E}$-projective presentations of $A$ and $B$. Moreover, since morphisms in $\mathcal E$ are closed under composition (see 3) of Lemma \ref{thm:classE}), given an $\mathcal{E}$-projective presentation $p:P\to A$ of $A$, 
then $fp:P\to B$ is an $\mathcal{E}$-projective presentation of $B$. We clearly have that $\mathrm{Hker}(p)\subseteq\mathrm{Hker}(fp)$ and then $[\mathrm{Hker}(p),P]\subseteq[\mathrm{Hker}(fp),P]$.
In the following commutative diagram 
\[
\begin{tikzcd}
	\mathbf{0} & {\frac{[\mathrm{Hker}(fp),P]}{[\mathrm{Hker}(fp),P][\mathrm{Hker}(p),P]^{+}}} & \frac{[P,P]}{[P,P][\mathrm{Hker}(p),P]^{+}} & \frac{[P,P]}{[P,P][\mathrm{Hker}(fp),P]^{+}} & \mathbf{0}\\
\mathbf{0}	 & \mathrm{Hker}(f) & A & B & \mathbf{0}
	\arrow[from=1-1, to=1-2]
	\arrow[from=1-2, to=1-3]
	\arrow[dotted, from=1-2, to=2-2, "a"]
	\arrow[from=1-3, to=1-4]
	\arrow[from=1-3, to=2-3, "b"]
	\arrow[from=1-4, to=1-5]
	\arrow[from=1-4, to=2-4, "c"]
	\arrow[from=2-1, to=2-2]
	\arrow[hook, from=2-2, to=2-3]
	\arrow[from=2-3, to=2-4, "f"']
	\arrow[from=2-4, to=2-5]
\end{tikzcd}
\]
the upper sequence is exact by the ``double quotient isomorphism theorem'' (see \cite[Lemma 1.3]{EG-monotone}), the morphisms $b$ and $c$ are induced, respectively, by the restrictions of $p$ and $fp$ to $[P,P]$, and the morphism $a$ is induced by the universal property of the kernel of $f$. The image of $b$ is $[A,A]$ and the image of $c$ is $[B,B]$ by Lemma \ref{Lemma-SS}. To conclude, it suffices to show that the image of $a$ is $[\mathrm{Hker}(f),A]$, and then apply the Snake Lemma \cite{Bourn-Snake} to the diagram above.
Since $\mathsf{Hopf}_{\Bbbk,\mathrm{coc}}$ is semi-abelian, we have that $p(\mathrm{Hker}(fp))=\mathrm{Hker}(f)$ (by the same argument as in the proof of Proposition \ref{WeakUniversal}) and then $p([\mathrm{Hker}(fp),P])=[p(\mathrm{Hker}(fp)),A]=[\mathrm{Hker}(f),A]$. Therefore, the image of $a$ is $[\mathrm{Hker}(f),A]$ and the thesis follows.
\end{proof}

\medskip

\noindent\textbf{Acknowledgments}. This paper was written while Andrea Sciandra was member of the “National Group for Algebraic and Geometric Structures and their Applications” (GNSAGA-INdAM). He was also partially supported by the project funded by the European Union -NextGenerationEU under NRRP, Mission 4 Component 2 CUP D53D23005960006 - Call PRIN 2022 No. 104 of February 2, 2022 of Italian Ministry of University and Research; Project 2022S97PMY Structures for Quivers, Algebras and Representations (SQUARE).

This
research was supported by the Fonds de la Recherche Scientifique – FNRS under
Grant CDR No.J.0080.23.

\end{document}